\newcommand{\CopyName}{A.\ I.\ Bandura, N.\ V.\ Petrechko, O.\ B. Skaskiv} 
\newcommand{\NAME}{A.\ I.\ Bandura, N.\ V.\ Petrechko, O.\ B.\ Skaskiv} %
\newcommand{\Volume}{?} 
\newcommand{\Number}{?} 
\newcommand{\rightheadtext}{Boundedness of $\mathbf{L}$-index in a bidisc} 
\renewcommand{\refname}{\refnam}
\newcommand{\tit}{Analytic functions in a bidisc  of bounded $\mathbf{L}$-index in joint variables} 
\date{}
\begin{document}
\hbox to \textwidth{\footnotesize\textsc  Математичні Студії. ?.\Volume, \No \Number
\hfill
Matematychni Studii. V.\Volume, No.\Number}
\vspace{0.3in}
\textup{\scriptsize{УДК 517.553}} \vs 
\markboth{{\NAME}}{{\rightheadtext}}
\begin{center} \textsc {\CopyName} \end{center}
\begin{center} \renewcommand{\baselinestretch}{1.3}\bf {\tit} \end{center}

\vspace{20pt plus 0.5pt} {\abstract{ \noindent A.\ I.\ Bandura, N.\ V.\ Petrechko, O.\  B.\ Skaskiv 
\textit{Analytic functions in a bidisc  of bounded $\mathbf{L}$-index in joint variables}\matref \vspace{3pt} \English 

A concept of boundedness of $\mathbf{L}$-index in joint variables (see in Bordulyak M.T. {\it The space of entire in $\mathbb{C}^n$ functions of bounded $L$-index,}  Mat. Stud., \textbf{4} (1995), 53--58. (in Ukrainian)) is generalised 
for analytic in a bidisc function. 
 We proved criteria of boundedness of $\mathbf{L}$-index in joint variables which describe local behaviour of partial derivative and 
 give an estimate of maximum modulus on a skeleton of polydisc.
 Some improvements of known sufficient conditions of boundednees of $\mathbf{L}$-index in joint variables are obtained. 



}} \vsk
\subjclass{30D60, 32A10, 32A40} 

\keywords{analytic function, bidisc, bounded $\mathbf{L}$-index in joint variables, maximum modulus, partial derivative, Cauchy's integral formula, main polynomial} 

\renewcommand{\refname}{\refnam}
\renewcommand{\proofname}{\ifthenelse{\value{lang}=0}{Proof}{\ifthenelse{\value{lang}=2}{Доказательство}{Доведення}}}

\vskip10pt

\medskip\noindent{\bf 1. Introduction.}\
Recently authors together with M. T. Bordulyak \cite{sufjointdir} introduced a class of entire functions of bounded of $\mathbf{L}$-index in joint variables with $\mathbf{L}(z)=(l_1(z),$ $\ldots,$ $l_{n}(z)),$
$z=(z_1,z_2,\ldots,z_n)\in\mathbb{C}^n.$ It was a generalisation of previous definition with  $\mathbf{L}(z)=(l_1(z_1),$ $\ldots,$ $l_{n}(z_n)),$
supposed by M. T. Bordulyak and M. M. Sheremeta \cite{bagzmin,prostir}. 
Meanwhile there are known papers of S. N. Strochyk, M. M. Sheremeta, V. O. Kushnir \cite{strosher,kusher}, devoted to $l$-index of 
analytic in a disc or in an arbitrary domain function. Their investigations are particularized in a monograph of Sheremeta \cite{sher} where listed a full bibliography on this topic.  
However they only considered the functions of one complex variable. There are only two papers about 
analytic in some domain functions of bounded index \cite{krishna,banduraball}. 
J. Gopala Krishna and S. M. Shah \cite{krishna}  introduced an analytic in a domain (a nonempty connected open set)
$\Omega\subset \mathbb{C}^n$ $(n\in\mathbb{N})$ function of bounded index for $\alpha=(\alpha_1,
\ldots,\alpha_n)\in \mathbb{R}^n_{+}.$  If
$\mathbf{L}(z)\equiv\left(\frac{1}{\alpha_1},\ldots,\frac{1}{\alpha_n}\right)$ and $\Omega=C^n$ then a Bordulyak-Sheremeta's definition  \cite{bagzmin,prostir} 
matches with a Krishna - Shah's definition. Besides, analytic in a domain function of bounded index 
by Krishna and Shah is an entire function. It follows from necessary condition
of $l$-index boundedness for analytic in the unit disc function (\cite{sher},Th.3.3, p.71): $\int_0^rl(t)dt\to\infty$ as $r\to 1.$ 

  In other above-mentioned preprint \cite{banduraball} authors proposed a generalisation of analytic in a domain function of bounded index, which
  was introduced by J. G. Krishna and S. M. Shah. We used slice function 
  to explore properties of analytic in the unit ball functions of bounded $L$-index in direction. This approach is well studied for entire functions in 
  \cite{BandSk,monograph,bandneobm,bandsets,bandplanezeros,openproblems}. 
  
  For analytic in the unit ball functions we proved necessary and sufficient conditions of boundedness of $L$-index in direction for
  analytic functions, got sufficient
  conditions of boundedness of $L$-index in direction for analytic solutions of PDE and estimated 
  growth of the functions, etc. 
Thus, method of slices is well suited as for entire functions in $\mathbb{C}^n$ as for analytic functions in a ball. 

Besides a ball, an important geometric object in $\mathbb{C}^n$ is a polydisc.
At same time there was not flexible definition of bounded index for analytic functions of several variables by approach of M. Sheremera, 
M. Bordulyak, M. Salmassi, F. Nuray, R. Patterson, B. C. Chakraborty \cite{bagzmin,prostir,nuraypattersonexponent2015,nuraypattersonmultivalence2015,indsalmassi,Chakraborty1995,Chakraborty2000,Chakraborty2001}.  Above we noted  that analytic function of bounded index by Krishna and Shah is an entire function. 
 Thus, necessity arises \textit{to introduce and 
to study analytic in polydisc functions of bounded $\mathbf{L}$-index in joint variables.}

\medskip\noindent{\bf 2. Main definitions and notations.}\
For simplicity we consider two-dimensional complex space, i. e. $\mathbb{C}^2.$ 
This helps to distinguish main methods of investigation. 
 Indeed our results can be easy deduced for $\mathbb{C}^n.$
 
We need some standard notations. Denote $\mathbb{R}_+=[0,+\infty),$
$\mathbf{0}=(0,0)\in\mathbb{R}^2_{+},$ $\mathbf{1}=(1,1)\in\mathbb{R}^2_{+},$ 
 $R=(r_1,r_2)\in\mathbb{R}^2_+,$ $z=(z_1,z_2).$
 For $A=(a_1,a_2)\in\mathbb{R}^2,$ $B=(b_1,b_2)\in\mathbb{R}^2,$   we put
 \begin{gather*}
 AB=(a_1b_1,a_2b_2),\ \
 A/B=(a_1/b_1,a_2/b_2), \ b\not=\mathbf{0},\ \
 A^B=a_1^{b_1}a_2^{b_2},\  b\in\mathbb{Z}_{+}^2,
 \end{gather*}
 and the notation $A<B$ means that $a_j<b_j,$ $j\in\{1,2\};$ the relation $A\leq B$ is defined similarly.
For  $K=(k_1,\ldots,k_n)\in \mathbb{Z}^n_{+}$ denote
$\displaystyle\|K\|=k_1+\cdots+k_n,$\
$K!=k_1!\cdot \ldots \cdot k_n!.$

The polydisc $\{z\in\mathbb{C}^2: \ |z_j-z_j^0|<r_j, \ j=1,2\}$ is denoted by $\mathbb{D}^2(z^0,R),$ its skeleton $\{z\in\mathbb{C}^2: \ |z_j-z_j^0|=r_j, \ j=1,2\}$ is denoted by $\mathbb{T}^2(z^0,R),$ and the closed polydisc
$\{z\in\mathbb{C}^2: \ |z_j-z_j^0|\leq r_j, \ j=1,2\}$ is denoted by $\mathbb{D}^2[z^0,R],$
      $\mathbb{D}^2=\mathbb{D}^2(\mathbf{0},\mathbf{1}),$
      $\mathbb{D}=\{z\in\mathbb{C}: \ |z|<1\}.$
For $p,$ $q\in\mathbb{Z}_+$ and partial derivative of analytic in $\mathbb{D}^2$ function $F(z_1,z_2)$ we will use the notation
  $$
  F^{(p,q)}(z_1,z_2):=\frac{\partial^{p+q}F(z_1,z_2)}{\partial z_1^p\partial z_2^q}.
  $$
Let $\mathbf{L}(z)=(l_1(z_1,z_2),l_2(z_1,z_2)),$ where $l_j(z_1,z_2): \mathbb{D}^2\to \mathbb{R}_+$ is a continuous function such that 
\begin{equation*}
\forall (z_1,z_2)\in \mathbb{D}^2 \ \ l_j(z_1,z_2)>\frac{\beta}{1-|z_j|},  \ j\in\{1,2\}
\end{equation*}
where $\beta>1$ is a some constant, $\boldsymbol{\beta}=(\beta,\beta).$ Strochyk S. N., Sheremeta M. M., Kushnir V. O.   \cite{sher,strosher,kusher} imposed a similar condition for a function $l: \mathbb{D}\to \mathbb{R}_+$  and $l: G\to \mathbb{R}_+,$ where $G$ is arbitrary domain in $\mathbb{C}$.

   An analytic  function $F\colon\mathbb{D}^2\to\mathbb{C}$ is called a function of \textit{bounded $\mathbf{L}$-index (in joint variables),} if there exists $n_0\in \mathbb{Z}_+$ such that for all $(z_1,z_2)\in \mathbb{D}^2$ and for all $(p_1,p_2)$ $\in$ $\mathbb{Z}^2_+$
\begin{gather}
     \frac{1}{p_1!p_2!} \frac{|F^{(p_1,p_2)}(z_1,z_2)|}{l_1^{p_1} (z_1,z_2)l_2^{p_2} (z_1,z_2)}\leq
\max \left \{ \frac{1}{k_1!k_2!} \frac{|F^{(k_1,k_2)}(z_1,z_2)|}{l_1^{k_1} (z_1,z_2)l_2^{k_2} (z_1,z_2)}:0\leq k_1+k_2\leq n_0 \right \}.
\label{net0}
\end{gather}
 The least such integer  $n_{0}$
is called the {\it  $\mathbf{L}$-index in joint variables of the function $F(z_1,z_2)$} and is denoted by $N(F,\mathbf{L},\mathbb{D}^2)=n_0.$ 
It is an analog of definition of entire function of bounded $\mathbf{L}$-index in joint variables in $\mathbb{C}^2$ (see \cite{sufjointdir,bagzmin,prostir}).

By $Q^2(\mathbb{D}^2)$ we denote the class of functions $\mathbf{L}$,  which satisfy the condition  
$$\forall r_j\in[0,\beta], j\in\{1,2\}: \ 0<\lambda_{1,j}(R)\leq \lambda_{2,j}(R)<\infty,$$
 where
\begin{gather} \label{lam1}
\lambda_{1,j}(R)=\inf\limits_{  (z_1^0,z_2^0)\in \mathbb{D}^2} \inf \left \{
  \frac{l_j(z_1,z_2)}{l_j(z_1^0,z_2^0)}: (z_1,z_2)\in \mathbb{D}^2\left[z^0, \frac{R}{\mathbf{L}(z^0)}\right]\right\},\\
  \lambda_{2,j}(R)=\sup\limits_{  (z_1^0,z_2^0)\in \mathbb{D}^2} \sup \left \{
  \frac{l_j(z_1,z_2)}{l_j(z_1^0,z_2^0)}: (z_1,z_2)\in \mathbb{D}^2\left[z^0, \frac{R}{\mathbf{L}(z^0)}\right]\right\}, \label{lam2}\\
\frac{R}{\mathbf{L}(z^0)}:= \left(\frac{r_1}{l_1(z_1^0,z_2^0)},\frac{r_2}{l_2(z_1^0,z_2^0)}\right). \nonumber
  \end{gather}
  \begin{example}\rm 
The function $F(z_1,z_2)=\exp{ \frac{1}{(1-z_1)(1-z_2)}}$ has bounded $\mathbf{L}$-index in joint variables with 
 $\mathbf{L}(z_1,z_2)=\big(\frac{1}{(1-|z_1|)^2(1-|z_2|)},\frac{1}{(1-|z_1|)(1-|z_2|)^2}\big)$ and $N(F,\mathbf{L},\mathbb{D}^2)=0.$
\end{example}
  \noindent{\bf 3. Behaviour of derivatives of function of bounded $\mathbf{L}$-index in joint variables.}
Denote $\mathcal{B}=(0,\beta]$ and $\mathcal{B}^2=(0,\beta]\times(0,\beta],$  where $\times$ means the Cartesian product, 
\begin{theorem} \label{petr1}
   Let $\mathbf{L} \in Q^2(\mathbb{D}^2)$. An analytic in $\mathbb{D}^2$  function $F$  has bounded  $\mathbf{L}$-index in joint variables if and only if for each
  $R\in\mathcal{B}^2$ there exist $n_0\in \mathbb{Z}_+$, $p_0>0$ such that for every $z^0=(z_1^0,z_2^0) \in\mathbb{D}^2$ there exists $(k_1^0,k_2^0)\in \mathbb{Z}_+^2$, $0\leq k_1^0+k_2^0\leq n_0$, and 
 \begin{gather}
  \max\left\{ \frac{1}{k_1!k_2!} \frac{|F^{(k_1,k_2)}(z_1,z_2)|}{l_1^{k_1} (z_1,z_2) l_2^{k_2} (z_1,z_2)}:k_1+k_2\leq n_0, (z_1,z_2)\in \mathbb{D}^2\left[z^0, \frac{R}{\mathbf{L}(z^0)}\right] \right\} \leq \nonumber\\
   \leq \frac{p_0}{k_1^0!k_2^0!}  \frac{|F^{(k_1^0,k_2^0)}(z_1^0,z_2^0)|}{l_1^{k_1^0} (z_1^0,z_2^0)l_2^{k_2^0} (z_1^0,z_2^0)}.
   \label{net1}
 \end{gather}
\end{theorem}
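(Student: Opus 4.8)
The plan is to reduce both implications to a single statement about the quantity
\[
M(z,n):=\max\{\tfrac{1}{k_1!k_2!}\tfrac{|F^{(k_1,k_2)}(z)|}{l_1^{k_1}(z)l_2^{k_2}(z)}:0\le k_1+k_2\le n\}.
\]
Indeed, for a fixed $n_0$ the left--hand side of \eqref{net1} is exactly $\max\{M(z,n_0):z\in\mathbb{D}^2[z^0,R/\mathbf{L}(z^0)]\}$, while choosing $(k_1^0,k_2^0)$ to be a pair attaining the value $M(z^0,n_0)$ makes the right--hand side equal to $p_0\,M(z^0,n_0)$. Moreover, boundedness of $\mathbf{L}$-index with index $n_0$ says precisely that for every $z$ the supremum of $\frac{1}{p_1!p_2!}\frac{|F^{(p_1,p_2)}(z)|}{l_1^{p_1}l_2^{p_2}}$ over all $(p_1,p_2)$ is already attained at some order $\le n_0$, i.e.\ equals $M(z,n_0)$. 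Thus it suffices to show: $F$ is of bounded $\mathbf{L}$-index if and only if for every $R\in\mathcal{B}^2$ there is $p_0>0$ with $\max_{z\in\mathbb{D}^2[z^0,R/\mathbf{L}(z^0)]}M(z,n_0)\le p_0\,M(z^0,n_0)$ for all $z^0$, where $n_0=N(F,\mathbf{L},\mathbb{D}^2)$.

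For sufficiency I would use the hypothesis for a single radius $R=(r_1,r_2)$ with $r_1,r_2>1$, which is admissible since $\beta>1$. Because the $(k_1,k_2)=(0,0)$ term gives $|F(z)|\le M(z,n_0)$, the hypothesis yields $\max_{\mathbb{T}^2(z^0,R/\mathbf{L}(z^0))}|F|\le p_0\,M(z^0,n_0)$. Applying Cauchy's integral formula to $F^{(p_1,p_2)}(z^0)$ over the skeleton $\mathbb{T}^2(z^0,R/\mathbf{L}(z^0))$ and estimating the kernel through the radii $r_j/l_j(z^0)$ gives $\frac{1}{p_1!p_2!}\frac{|F^{(p_1,p_2)}(z^0)|}{l_1^{p_1}(z^0)l_2^{p_2}(z^0)}\le p_0\,M(z^0,n_0)\,r_1^{-p_1}r_2^{-p_2}$, where the weights $l_j(z^0)$ cancel against the radii. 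Choosing $\tilde n_0$ so large that $(\min(r_1,r_2))^{\tilde n_0}\ge p_0$, the factor $p_0\,r_1^{-p_1}r_2^{-p_2}$ is $\le 1$ for all $p_1+p_2>\tilde n_0$; for $p_1+p_2\le\tilde n_0$ the required inequality holds automatically, since then the term is one of those entering $M(z^0,\tilde n_0)$. Hence $F$ is of bounded $\mathbf{L}$-index with index at most $\tilde n_0$.

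Necessity is the substantial direction. Fix $R$ and a point $w=z^0$; as $l_j$ is only continuous I would avoid differentiating it by working with the frozen--weight quantities $\psi_{k_1,k_2}(z):=\frac{|F^{(k_1,k_2)}(z)|}{k_1!k_2!\,l_1^{k_1}(w)l_2^{k_2}(w)}$ and $\Psi(z):=\max_{k_1+k_2\le n_0}\psi_{k_1,k_2}(z)$. Membership $\mathbf{L}\in Q^2(\mathbb{D}^2)$ makes $\Psi(z)$ and $M(z,n_0)$ comparable on $\mathbb{D}^2[w,R/\mathbf{L}(w)]$ up to constants built from $\lambda_{1,j}(R),\lambda_{2,j}(R)$, with $\Psi(w)=M(w,n_0)$. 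Integrating $F^{(k_1+1,k_2)}$ along the axis--parallel segment issuing from $w$ and dividing, the scaling $|z_1-w_1|\,l_1(w)\le r_1$ cancels the weight and gives $\psi_{k_1,k_2}(z_1(t),w_2)\le\Psi(w)+r_1(n_0+1)\int_0^t\psi_{k_1+1,k_2}\,ds$; the order-$(n_0+1)$ derivative arising when $k_1+k_2=n_0$ is controlled by boundedness of index together with the $Q^2$-bounds, so $\psi_{k_1+1,k_2}\le C(R)\,\Psi$ throughout. Taking the maximum over $(k_1,k_2)$ and invoking Gronwall's inequality yields $\Psi\le\Psi(w)\exp(r_1(n_0+1)C(R))$ along the first segment, and a second application in the $z_2$-direction gives $\max_{\mathbb{D}^2[w,R/\mathbf{L}(w)]}\Psi\le\Psi(w)\exp((r_1+r_2)(n_0+1)C(R))$. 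Restoring the comparability constants produces the required $p_0$, uniformly in $w$.

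The main obstacle is this necessity step: the crux is to obtain a \emph{closed} differential inequality for the maximum $\Psi$ even though $l_j$ is merely continuous and the order-$(n_0+1)$ derivatives lie outside the defining maximum. Freezing the weights at $w$, converting every weight ratio through the $Q^2$-constants, and feeding the bounded-index estimate into a Gronwall argument is precisely what makes the exponential bound uniform in $z^0$ and valid for every radius up to $\beta$, with no smallness assumption and hence no need for a chaining subdivision.
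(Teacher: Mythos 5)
Your proposal is correct, and both halves give constants uniform in $z^0$, as the statement requires. The sufficiency half is essentially the paper's own argument in streamlined form: the paper likewise applies Cauchy's integral formula on $\mathbb{T}^2\left(z^0,\frac{R}{\mathbf{L}(z^0)}\right)$ with $r_j\in(1,\beta]$ and kills $p_0$ by the geometric factor $r_1^{-s_1}r_2^{-s_2}$; the only difference is that the paper estimates $F^{(k+s)}(z^0)$ through $F^{(k)}$ on the skeleton for general $k$, $k_1+k_2\le n_0$, whereas you use only the $(0,0)$ term of the left-hand maximum, which indeed suffices. The necessity half is where you genuinely depart from the paper. The paper runs a discrete chaining argument: it subdivides the radius into $q(R)=\lfloor 2(N+1)(r_1+r_2)\prod_{j=1}^{2}(\lambda_{1,j}(R))^{-N}(\lambda_{2,j}(R))^{N+1}\rfloor+1$ steps, forms the frozen-weight maxima $S_p^{*}(z^0,R)$ over the nested polydiscs $\mathbb{D}^2\left[z^0,\frac{pR}{q\mathbf{L}(z^0)}\right]$, and proves $S_p^{*}\le 2S_{p-1}^{*}$ by integrating $\frac{d}{dt}\left|F^{(K^{(p)})}\right|$ along a short segment from the maximizing point $z^{(p)}$ back to a point $\tilde z^{(p)}$ of the previous polydisc, the order-$(N+1)$ derivatives being absorbed by exactly the same combination of bounded index and $Q^2$-bounds as your constant $C(R)$; iterating gives $p_0=\bigl(2\prod_{j=1}^{2}(\lambda_{1,j}(R))^{-N}(\lambda_{2,j}(R))^{N}\bigr)^{q}$. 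Your version replaces this discrete iteration by a continuous one: the same frozen weights (your $\Psi$ versus the paper's $S_p^{*}$), the same two ingredients ($Q^2$-comparability and bounded-index control of the $(N+1)$-st order terms), but the inequality is closed by Gronwall's lemma along two full axis-parallel segments, so no subdivision is needed and $p_0\asymp\prod_{j=1}^{2}(\lambda_{1,j}(R))^{-N}\exp\bigl((r_1+r_2)(N+1)C(R)\bigr)$. What each approach buys: the paper's chaining is elementary — a factor-2 bound iterated $q$ times, in effect a discrete Gronwall — while yours is shorter, avoids the bookkeeping of the points $z^{(p)},\tilde z^{(p)}$ and the maximum principle step, and makes the exponential character of the constant transparent; the constants produced are of the same order, and both are independent of $z^0$, which is the crux.
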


    \begin{proof}
	Let $F$ be of bounded  $\mathbf{L}$-index in joint variables with $N=N(F,\mathbf{L},\mathbb{D}^2)<\infty.$
For every $r_j\in(0,\beta],$ $j\in\{1,2\}$ we put
$$q= q(R)= \lfloor 2(N+1)(r_1+r_2)\prod_{j=1}^{2}(\lambda_{1,j}(R))^{-N}(\lambda_{2,j}(R))^{N+1}\rfloor+1$$
where $\lfloor x \rfloor$ is the entire part of the real number $x,$ i.e. it is a floor function.
	For $p\in\{0,\ldots,q\}$ and $z^0\in\mathbb{D}^2$ we denote
\begin{gather*}
	S_p(z^0,R)=\max\left\{\frac{|F^{(k_1,k_2)}(z_1,z_2)|}{k_1!k_2!l_1^{k_1} (z_1,z_2)l_2^{k_2} (z_1,z_2)}: 0\le k_1+k_2\leq N, z\in \mathbb{D}^2 \left[z^0,\frac{pR}{q\mathbf{L}(z^0)}\right] \right\},
\\
	S^{*}_p(z^0,R)=\max\left\{\frac{|F^{(K)}(z_1,z_2)|}{k_1!k_2!l_1^{k_1} (z_1^0,z_2^0)l_2^{k_2} (z_1^0,z_2^0)}: 0\le k_1+k_2\leq N, z\in \mathbb{D}^2 \left[z^0,\frac{pR}{q\mathbf{L}(z^0)}\right] \right\}.
\end{gather*}
Using \eqref{lam1} and $\mathbb{D}^2\left[z^0,\frac{pR}{q\mathbf{L}(z^0)}\right] \subset \mathbb{D}^2 \left[z^0,\frac{R}{\mathbf{L}(z^0)}\right] $,  we have
\begin{gather*}
  S_p(z^0,R)= \nonumber \\ \!=\!\max\left\{\frac{|F^{(k_1,k_2)}(z_1,z_2)|}{k_1!k_2!l_1^{k_1} (z_1,z_2)l_2^{k_2} (z_1,z_2)}\frac{l_1^{k_1} (z_1^0,z_2^0)l_2^{k_2} (z_1^0,z_2^0)}{l_1^{k_1} (z_1^0,z_2^0)l_2^{k_2} (z_1^0,z_2^0)}: 0\!\le\!k_1+\!k_2\leq \!N,
  z\in\! \mathbb{D}^2\! \left[z^0,\frac{pR}{q\mathbf{L}(z^0)}\!\right]\! \right\}\! \leq\! \nonumber \\ \leq S^{*}_p(z^0,R)\max\left\{\frac{l_1^{N} (z_1^0,z_2^0)l_2^{N} (z_1^0,z_2^0)}{l_1^{N} (z_1,z_2)l_2^{N} (z_1,z_2)}: z\in \mathbb{D}^2 \left[z^0,\frac{pR}{q\mathbf{L}(z^0)}\right] \right\} \leq \nonumber \\ \leq S^{*}_p(z^0,R) \frac{1}{(\lambda_{1,1}(R)\lambda_{1,2}(R))^N}=S^{*}_p(z^0,R)\prod_{j=1}^{2} (\lambda_{1,j}(R))^{-N}.
\end{gather*}
and, using \eqref{lam2},  we obtain 
\begin{gather}
  S^{*}_p(z^0,R)= \nonumber \\ = \max\left\{\frac{|F^{(k_1,k_2)}(z_1,z_2)|}{k_1!k_2!l_1^{k_1} (z_1,z_2)l_2^{k_2} (z_1,z_2)}\!\frac{l_1^{k_1} (z_1,z_2)l_2^{k_2} (z_1,z_2)}{l_1^{k_1} (z_1^0,z_2^0)l_2^{k_2} (z_1^0,z_2^0)}: \!k_1+\!k_2\leq N,
  z\in \mathbb{D}^2 \left[z^0,\frac{pR}{q\mathbf{L}(z^0)}\right] \right\} \leq \nonumber \\
  \leq \max\left\{\frac{|F^{(k_1,k_2)}(z_1,z_2)|}{\!k_1!\!k_2!l_1^{k_1} (z_1,z_2)l_2^{k_2} (z_1,z_2)}\!(\lambda_{2,1}(R))^{\!k_1} \!(\lambda_{2,2}(R))^{\!k_2}\!: \!k_1+\!k_2\leq \!N,
  z\in \mathbb{D}^2 \left[z^0,\frac{pR}{q\mathbf{L}(z^0)}\right] \right\} \leq \nonumber \\ \leq S_p(z^0,R) 
  (\lambda_{2,1}(R)\lambda_{2,2}(R))^{N}=S_p(z^0,R)\prod_{j=1}^{2} (\lambda_{2,j}(R))^N.
  \label{netss2}
\end{gather}
Let $K^{(p)}=(k_1^{(p)},k_2^{(p)})$,  $k_1^{(p)}+k_2^{(p)}\leq N$ and $z^{(p)}=(z_1^{(p)},z_2^{(p)}) \in \mathbb{D}^2 \left[z^0,\frac{pR}{q\mathbf{L}(z^0)}\right]$ be such that

\begin{gather}
S_p^*(z^0,R)=\frac{|F^{(k_1^{(p)},k_2^{(p)})}(z_1^{(p)},z_2^{(p)})|}{k_1^{(p)}!k_2^{(p)}!l_1^{k_1^{(p)}} (z_1^{0},z_2^{0})l_2^{k_2^{(p)}} (z_1^0,z_2^0)}
\label{stars}
\end{gather}
Since by the maximum principle $z^{(p)}\in \mathbb{T}^2(z^0,\frac{pR}{q\mathbf{L}(z^0)}),$ we have $z^{(p)}\neq z^0.$ We choose \\
	$
	\tilde z^{(p)}_{1}=z_1^0+\frac{p-1}{p}(z_1^{(p)}-z_1^0) $  and $
\tilde z^{(p)}_{2}=z_2^0+\frac{p-1}{p}(z_2^{(p)}-z_2^0).
$
Then for every $j\in\{1,2\}$ we have that
\begin{gather}
  |\tilde z^{(p)}_{j}-z_j^0|=\frac{p-1}{p}|z^{(p)}_{j}-z_j^0|=\frac{p-1}{p} \frac{pr_j}{ql_j(z_1^0,z_2^0)}
  \label{zet}\\
  |\tilde z^{(p)}_{j}-z_j^{(p)}|=|z_j^{0}+\frac{p-1}{p}(z_j^{(p)}-z_j^0)-z_j^{(p)}|=
  \frac{1}{p}|z_j^0-z_j^{(p)}|= \frac{1}{p}\frac{pr_j}{ql_j(z_1^0,z_2^0)}=\frac{r_j}{ql_j(z_1^0,z_2^0)} \label{zetwave}
\end{gather}
From \eqref{zet} we obtain
  $\tilde z^{(p)} \in \mathbb{D}^2 \left[z^0,\frac{(p-1)R}{q(R)\mathbf{L}(z^0)}\right]$ and thus
\begin{gather*}
 S^{*}_{p-1}(z^0,R)\geq 
 \frac{|F^{(k_1^{(p)},k_2^{(p)})}(\tilde z^{(p)})|}{k_1^{(p)}!k_2^{(p)}!l_1^{k_1^{(p)}} (z_1^0,z_2^0)l_2^{k_2^{(p)}} (z_1^0,z_2^0)}
\end{gather*}
 From \eqref{stars} it follows that
\begin{gather}
  0\leq S^{*}_p(z^0,R)-S^{*}_{p-1}(z^0,R) \leq \frac{|F^{(k_1^{(p)},k_2^{(p)})}(z^{(p)})|-|F^{(k_1^{(p)},k_2^{(p)})}(\tilde z^{(p)})|}{k_1^{(p)}!k_2^{(p)}!l_1^{k_1^{(p)}} (z_1^0,z_2^0)l_2^{k_2^{(p)}} (z_1^0,z_2^0)}=\nonumber \\
  = \frac{1}{k_1^{(p)}!k_2^{(p)}!l_1^{k_1^{(p)}} (z_1^0,z_2^0)l_2^{k_2^{(p)}} (z_1^0,z_2^0)}\int_{0}^{1}
  \frac{d}{dt}|F^{(k_1^{(p)},k_2^{(p)})}(\tilde z^{(p)}+t(z^{(p)}-\tilde z^{(p)}))|dt \leq
  \nonumber \\
  \leq \frac{1}{k_1^{(p)}!k_2^{(p)}!l_1^{k_1^{(p)}} (z_1^0,z_2^0)l_2^{k_2^{(p)}} (z_1^0,z_2^0)}\int_{0}^{1}
  (|z_1^{(p)}-\tilde z_1^{(p)} |\cdot |F^{(k_1^{(p)}+1,k_2^{(p)})}(\tilde z^{(p)}+t(z^{(p)}-\tilde z^{(p)}))|+ \nonumber \\
  +|z_2^{(p)}-\tilde z_2^{(p)}|\cdot |F^{(k_1^{(p)},k_2^{(p)}+1)}(\tilde z^{(p)}+t(z^{(p)}-\tilde z^{(p)}))|)dt =\nonumber \\
  = \frac{1}{k_1^{(p)}!k_2^{(p)}!l_1^{k_1^{(p)}} (z_1^0,z_2^0)l_2^{k_2^{(p)}} (z_1^0,z_2^0)}
  (|z_1^{(p)}-\tilde z_1^{(p)}| \cdot |F^{(k_1^{(p)}+1,k_2^{(p)})}(\tilde z^{(p)}+t^*(z^{(p)}-\tilde z^{(p)}))|+ \nonumber \\
  +|z_2^{(p)}-\tilde z_2^{(p)}| \cdot |F^{(k_1^{(p)},k_2^{(p)}+1)}(\tilde z^{(p)}+t^*(z^{(p)}-\tilde z^{(p)}))|),
  \label{big}
\end{gather}
where $0\leq t^*\leq 1, \tilde z^{(p)}+t^*(z^{(p)}-\tilde z^{(p)}) \in \mathbb{D}^2 (z^0,\frac{pR}{q\mathbf{L}(z^0)})$.
For $z\in \mathbb{D}^2 (z^0,\frac{pR}{q\mathbf{L}(z^0)})$ and $j=(j_1,j_2)$, $j_1+j_2\leq N+1$ we have
\begin{gather*}
  \frac{|F^{(j_1,j_2)}(z_1,z_2)|l_1^{j_1} (z_1,z_2)l_2^{j_2} (z_1,z_2)}{j_1!j_2!l_1^{j_1} (z_1^0,z_2^0)l_2^{j_2} (z_1^0,z_2^0)l_1^{j_1} (z_1,z_2)l_2^{j_2} (z_1,z_2)} \leq \\
  \leq (\lambda_{2,1}(R))^{j_1}(\lambda_{2,2}(R))^{j_2}\max\left\{\frac{|F^{(k_1,k_2)}(z_1,z_2)|}{k_1!k_2!l_1^{k_1} (z_1,z_2)l_2^{k_2} (z_1,z_2)}: 0\leq k_1+k_2\leq N \right\} \leq \\
  \leq \prod_{j=1}^{2} (\lambda_{2,j}(R))^{N+1}(\lambda_{1,j}(R))^{-N} \max\left\{\frac{|F^{(k_1,k_2)}(z_1,z_2)|}{k_1!k_2!l_1^{k_1} (z_1^0,z_2^0)l_2^{k_2} (z_1^0,z_2^0)}: 0\leq k_1+k_2\leq N \right\} \leq \\
  \leq \prod_{j=1}^{2} (\lambda_{2,j}(R))^{N+1}(\lambda_{1,j}(R))^{-N} S^{*}_p(z^0,R).
\end{gather*}
From \eqref{big} and \eqref{zetwave} we obtain
\begin{gather*}
  0\leq S^{*}_p(z^0,R)-S^{*}_{p-1}(z^0,R) \leq \\ \leq \prod_{j=1}^{2} (\lambda_{2,j}(R))^{N+1}(\lambda_{1,j}(R))^{-N} S^{*}_p(z^0,R)\sum_{j=1}^{2}(k_j^{(p)}+1)l_j(z_1^0,z_2^0)|z_j^{(p)}-\tilde{z}_j^{(p)}|= \\
  \\ = \prod_{j=1}^{2} (\lambda_{2,j}(R))^{N+1}(\lambda_{1,j}(R))^{-N} \frac{S^{*}_p(z^0,R)}{q(R)}\sum_{j=1}^{2}(k_j^{(p)}+1)r_j \leq \\
  \leq \prod_{j=1}^{2} (\lambda_{2,j}(R))^{N+1}(\lambda_{1,j}(R))^{-N} \frac{S^{*}_p(z^0,R)}{q(R)} (N+1)(r_1+r_2) \leq \frac{1}{2}S^{*}_p(z^0,R).
\end{gather*}
This inequality implies
\begin{gather*}
  S^{*}_p(z^0,R) \leq 2S^{*}_{p-1}(z^0,R),
\end{gather*}
and in view of inequalities \eqref{netss2} and \eqref{stars} we have
\begin{gather*}
  S_p(z^0,R) \leq 2 \prod_{j=1}^{2} (\lambda_{1,j}(R))^{-N}S^{*}_{p-1}(z^0,R) \leq
  2 \prod_{j=1}^{2} (\lambda_{1,j}(R))^{-N}(\lambda_{2,j}(R))^{N}S_{p-1}(z^0,R)
\end{gather*}
Therefore,
\begin{gather}
  \max\left\{\frac{|F^{(k_1,k_2)}(z_1,z_2)|}{k_1!k_2!l_1^{k_1} (z_1,z_2)l_2^{k_2} (z_1,z_2)}: 0\le k_1+k_2\leq N, z\in \mathbb{D}^2 \left[z^0,\frac{pR}{q\mathbf{L}(z^0)}\right] \right\}=\nonumber \\
  = S_q(z^0,R) \leq 2 \prod_{j=1}^{2} (\lambda_{1,j}(R))^{-N}(\lambda_{2,j}(R))^{N}S_{q-1}(z^0,R) \leq \ldots \leq \nonumber \\ \leq (2 \prod_{j=1}^{2} (\lambda_{1,j}(R))^{-N}(\lambda_{2,j}(R))^{N})^q S_{0}(z^0,R)=\nonumber \\
  = (2 \prod_{j=1}^{2} (\lambda_{1,j}(R))^{-N}(\lambda_{2,j}(R))^{N})^q \max\left\{\frac{|F^{(k_1,k_2)}(z^0_1,z^0_2)|}{k_1!k_2!l_1^{k_1} (z_1^0,z_2^0)l_2^{k_2} (z_1^0,z_2^0)}: 0\le k_1+k_2\leq N \right\}.
  \label{lastth1}
\end{gather}

From \eqref{lastth1} we obtain inequality \eqref{net1}  with $p_0=(2 \prod_{j=1}^{2} (\lambda_{1,j}(R))^{-N}(\lambda_{2,j}(R))^{N})^q $ and some $k^0=k_1^0+k_2^0\leq N$.
		The necessity of condition \eqref{net1} is proved.
  
Now we prove the sufficiency. Suppose that for every $R\in\mathcal{B}^2$ $\exists n_0 \in \mathbb{Z}_+, p_0>1 $ such that $\forall z_0 \in \mathbb{D}^2 $ and some
$ k_0 \in \mathbb{Z}_+^2, k_1^0+k_2^0\leq n_0,$ the inequality \eqref{net1} holds.

 We write Cauchy's formula as following  
$\forall z^0\in \mathbb{D}^2$ $\forall k\in \mathbb{Z}_+^2$ $\forall s \in \mathbb{Z}_+^2$
	$$
	\frac{F^{(k_1+s_1,k_2+s_2)}(z_1^0,z_2^0)}{s_1!s_2!}=\frac{1}{(2\pi i)^2} \int_{\mathbb{T}^2\left(z^0,\frac{R}{\mathbf{L}(z^0)}\right)} \frac{F^{(k_1,k_2)}(z_1,z_2)}{(z_1-z_1^0)^{s_1+1}(z_2-z_2^0)^{s_2+1}} dz_1dz_2.
	$$
 Therefore, applying \eqref{net1}, we have 
\begin{gather*}
  \frac{|F^{(k_1+s_1,k_2+s_2)}(z_1^0,z_2^0)|}{s_1!s_2!}\leq \frac{1}{(2\pi)^2} \int_{\mathbb{T}^2\left(z^0,\frac{R}{\mathbf{L}(z^0)}\right)} \frac{|F^{(k_1,k_2)}(z_1,z_2)|}{|z_1-z_1^0|^{s_1+1}|z_2-z_2^0|^{s_2+1}} |dz_1||dz_2|
  \leq \\
  \\ \leq \frac{1}{(2\pi)^2} \int_{\mathbb{T}^2\left(z^0,\frac{R}{\mathbf{L}(z^0)}\right)} |F^{(k_1,k_2)}(z_1,z_2)| \frac{{l_1^{s_1+1}(z^0)}{l_2^{s_2+1}(z^0)}}{{r_1}^{s_1+1}{r_2}^{s_2+1}} |dz_1||dz_2| \leq \\
  \leq
 \int_{\mathbb{T}^2\left(z^0,\frac{R}{\mathbf{L}(z^0)}\right)} |F^{(k_1^0,k_2^0)}(z_1^0,z_2^0)| \frac{k_1!k_2!p_0(\lambda_{2,1}(R)\lambda_{2,2}(R))^{n_0}{l_1^{s_1+k_1+1}(z^0)}
    {l_2^{s_2+k_2+1}(z^0)}}
    {(2\pi)^2k_1^0!k_2^0!{r_1}^{s_1+1}{r_2}^{s_2+1}{l_1^{k_1^0}(z^0)}{l_2^{k_2^0}(z^0)}} |dz_1||dz_2|=
 \\
 =|F^{(k_1^0,k_2^0)}(z_1^0,z_2^0)| \frac{k_1!k_2!p_0(\lambda_{2,1}(R)\lambda_{2,2}(R))^{n_0}{l_1^{s_1+k_1}(z^0)}{l_2^{s_2+k_2}(z^0)}}
    {k_1^0!k_2^0!{r_1}^{s_1}{r_2}^{s_2}{l_1^{k_1^0}(z^0)}{l_2^{k_2^0}(z^0)}}.
\end{gather*}
This implies
\begin{gather}
 \! \frac{|F^{(k_1+s_1,k_2+s_2)}(z_1^0,z_2^0)|}{(k_1+s_1)!(k_2+s_2)!{l_1^{s_1+k_1}(z^0)}{l_2^{s_2+k_2}(z^0)}} 
  \leq \frac{(\lambda_{2,1}(R)\lambda_{2,2}(R))^{n_0}p_0k_1!k_2!s_1!s_2!}{(k_1+s_1)!(k_2+s_2)!r_1^{s_1}r_2^{s_2}}
  \frac{|F^{(k_1^0,k_2^0)}(z_1^0,z_2^0)|}{k_1^0!k_2^0!{l_1^{k_1^0}(z^0)}{l_2^{k_2^0}(z^0)}}. \label{dopner}
  \end{gather}
  Obviously, that 
  $$\frac{k_1!k_2!s_1!s_2!}{(k_1+s_1)!(k_2+s_2)!}=\frac{s_1!}{(k_1+1)\cdot\ldots \cdot (k_1+s_1)} \frac{s_2!}{(k_2+1)\cdot\ldots \cdot (k_2+s_2)}\le1.$$ 
  We choose $r_j\in(1,\beta],$ $j\in\{1,2\}.$ 
  Hence, 
  $$
  \frac{(\lambda_{2,1}(R)\lambda_{2,2}(R))^{n_0}p_0}{r_1^{s_1}r_2^{s_2}} \rightarrow 0 \text{ as } s_1+s_2\to +\infty.
  $$
  
Thus, there exists $s_0$ such that as $s_1+s_2\geq s_0$ the inequality holds:
   $$
  \frac{(\lambda_{2,1}(R)\lambda_{2,2}(R))^{n_0}p_0k_1!k_2!s_1!s_2!}{(k_1+s_1)!(k_2+s_2)!r_1^{s_1}r_2^{s_2}}\leq 1.
  $$
  Inequality \eqref{dopner} yields that
  \begin{gather*}
  \frac{|F^{(k_1+s_1,k_2+s_2)}(z_1^0,z_2^0)|}{(k_1+s_1)!(k_2+s_2)!{l_1}^{k_1+s_1}(z^0){l_2}^{k_2+s_2}(z^0)} \leq
  \frac{|F^{(k_1^0,k_2^0)}(z_1^0,z_2^0)|}{k_1^0!k_2^0!{l_1}^{k_1^0}(z^0){l_2}^{k_2^0}(z^0)}.
  \end{gather*}
This means that for every $ j\in \mathbb{Z}_+^2$ 
  \begin{gather*}
   \frac{|F^{(j_1,j_2)}(z_1^0,z_2^0)|}{j_1!j_2! l_1^{j_1}(z^0) l_2^{j_2}(z^0)} \leq
\max\left\{  \frac{|F^{(k_1,k_2)}(z_1^0,z_2^0)|}{k_1!k_2!{l_1^{k_1}(z^0)}{l_2^{k_2}(z^0)}}:  k_1+k_2\le s_0+n_0\right\}
  \end{gather*}
  where $s_0$ and $n_0$ are independent of $z_0$. Therefore, the function $F$ has bounded $\mathbf{L}$-index in joint variables $N\le s_0+n_0.$
\end{proof}

\begin{theorem}
\label{cor1}
Let $\mathbf{L} \in Q^2(\mathbb{D}^2).$ In order that an analytic in $\mathbb{D}^2$ function $F$ be of bounded $\mathbf{L}$-index in joint variables it is necessary that for every $R\in\mathcal{B}^2$ $\exists n_0 \in \mathbb{Z}_+$ $\exists p\geq 1$ $\forall z^0 \in \mathbb{D}^2$ $\exists k^0 \in \mathbb{Z}_+^2 $, $k_1^0+k_2^0 \leq n_0,$ and  
\begin{gather}
 \max\left \{ |F^{(k_1^0,k_2^0)}(z_1,z_2)|:z \in \mathbb{D}^2\left[z^0, \frac{R}{\mathbf{L}(z^0)}\right] \right\} \leq p|F^{(k_1^0,k_2^0)}(z_1^0,z_2^0)|
 \label{conc1}
\end{gather}
 and it is sufficient that for every $R\in\mathcal{B}^2$  $\exists n_0 \in \mathbb{Z}_+$ $\exists p\leq 1$ $\forall z^0 \in \mathbb{D}^2$  $\exists k_1^0 \leq n_0$ $\exists k_2^0 \leq n_0$ and 
\begin{gather}
  \max \left\{ |F^{(k_1^0,0)}(z_1,z_2)|:z \in \mathbb{D}^2\left[z^0, \frac{R}{\mathbf{L}(z^0)}\right] \right\} \leq p|F^{(k_1^0,0)}(z_1^0,z_2^0)|
 \label{conc3}\\
 \max \left\{ |F^{(0,k_2^0)}(z_1,z_2)|:z \in \mathbb{D}^2\left[z^0, \frac{R}{\mathbf{L}(z^0)}\right] \right\} \leq p|F^{(0,k_2^0)}(z_1^0,z_2^0)|.
 \label{conc4}
\end{gather}
\end{theorem}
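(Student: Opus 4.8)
The plan is to prove the two implications separately: the necessity follows almost immediately from Theorem~\ref{petr1}, while the sufficiency is obtained by a Cauchy-integral estimate that mirrors the sufficiency argument of that theorem.

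For the necessity, I would invoke Theorem~\ref{petr1}: since $F$ has bounded $\mathbf{L}$-index, for each $R\in\mathcal{B}^2$ there are $n_0$ and $p_0\ge1$ such that every $z^0\in\mathbb{D}^2$ admits a multi-index $(k_1^0,k_2^0)$, $k_1^0+k_2^0\le n_0$, for which \eqref{net1} holds. Keeping on the left of \eqref{net1} only the single summand with $(k_1,k_2)=(k_1^0,k_2^0)$, letting $z$ run over $\mathbb{D}^2[z^0,R/\mathbf{L}(z^0)]$, and cancelling $k_1^0!k_2^0!$, I obtain
$$|F^{(k_1^0,k_2^0)}(z)|\le p_0\,\frac{l_1^{k_1^0}(z)l_2^{k_2^0}(z)}{l_1^{k_1^0}(z^0)l_2^{k_2^0}(z^0)}\,|F^{(k_1^0,k_2^0)}(z^0)|.$$
Because $\lambda_{2,j}(R)\ge1$ and $k_j^0\le n_0$, definition \eqref{lam2} bounds the ratio of $l$-factors by $(\lambda_{2,1}(R)\lambda_{2,2}(R))^{n_0}$, and taking the maximum over $z$ gives \eqref{conc1} with $p=p_0(\lambda_{2,1}(R)\lambda_{2,2}(R))^{n_0}\ge1$.

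For the sufficiency I would turn the hypotheses \eqref{conc3}--\eqref{conc4} into estimates on arbitrary mixed derivatives at the centre via Cauchy's formula. Applying it to $g=F^{(k_1^0,0)}$ yields
$$\frac{F^{(k_1^0+s_1,s_2)}(z^0)}{s_1!s_2!}=\frac{1}{(2\pi i)^2}\int_{\mathbb{T}^2(z^0,R/\mathbf{L}(z^0))}\frac{F^{(k_1^0,0)}(z)}{(z_1-z_1^0)^{s_1+1}(z_2-z_2^0)^{s_2+1}}\,dz_1dz_2,$$
and, estimating the integrand on the skeleton where $|z_j-z_j^0|=r_j/l_j(z^0)$ and inserting \eqref{conc3}, I obtain
$$\frac{|F^{(k_1^0+s_1,s_2)}(z^0)|}{s_1!s_2!\,l_1^{s_1}(z^0)l_2^{s_2}(z^0)}\le\frac{p\,k_1^0!}{r_1^{s_1}r_2^{s_2}}\,\frac{|F^{(k_1^0,0)}(z^0)|}{k_1^0!\,l_1^{k_1^0}(z^0)}.$$
The symmetric computation with $g=F^{(0,k_2^0)}$ and \eqref{conc4} bounds $F^{(s_1,k_2^0+s_2)}(z^0)$ in the same way in terms of $F^{(0,k_2^0)}(z^0)$. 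Choosing $r_1,r_2\in(1,\beta]$ and using $k_j^0!\le n_0!$, the geometric factor $r_1^{-s_1}r_2^{-s_2}$ makes both prefactors $\le1$ as soon as $s_1+s_2\ge s_0$ for a threshold $s_0$ depending only on $p$ and $n_0$, hence uniform in $z^0$.

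To conclude I would check the defining inequality \eqref{net0} for every $(p_1,p_2)$ with $p_1+p_2$ large. If $p_1\ge k_1^0$ I set $s_1=p_1-k_1^0$, $s_2=p_2$ and use the first estimate; if $p_1<k_1^0$, then $p_1<k_1^0\le n_0$ forces $p_2$ to be large, so $p_2\ge k_2^0$, and I set $s_1=p_1$, $s_2=p_2-k_2^0$ and use the second. After rewriting $s_j!$, $l_j^{s_j}$ back into $p_j!$, $l_j^{p_j}$ (which only inserts factors $\le1$, since $l_j>1$), the right-hand side becomes one of the admissible terms $\frac{|F^{(k_1^0,0)}(z^0)|}{k_1^0!l_1^{k_1^0}(z^0)}$ or $\frac{|F^{(0,k_2^0)}(z^0)|}{k_2^0!l_2^{k_2^0}(z^0)}$ appearing in the maximum over $k_1+k_2\le n_0$. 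Taking $N=s_0+2n_0$ then shows $F$ has bounded $\mathbf{L}$-index in joint variables with $N(F,\mathbf{L},\mathbb{D}^2)\le s_0+2n_0$. I expect the main obstacle to be the bookkeeping in this case split so that the threshold $s_0$ and the final bound $s_0+2n_0$ are genuinely independent of $z^0$: one must verify that when $p_1<k_1^0$ the remaining order $p_2$ is automatically $\ge k_2^0$ and that $s_1+s_2\ge s_0$ in both branches, which is exactly what the choice $N\ge s_0+2n_0$ guarantees.
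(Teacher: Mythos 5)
Your proposal is correct and follows the paper's own proof essentially step for step: the necessity is obtained, exactly as in the paper, by specializing the maximum in \eqref{net1} of Theorem \ref{petr1} to the single index $(k_1^0,k_2^0)$ and absorbing the ratios $l_j(z_1,z_2)/l_j(z_1^0,z_2^0)$ into $(\lambda_{2,1}(R)\lambda_{2,2}(R))^{n_0}$, giving $p=p_0(\lambda_{2,1}(R)\lambda_{2,2}(R))^{n_0}$, while the sufficiency uses the same Cauchy integral over $\mathbb{T}^2\left(z^0,\frac{R}{\mathbf{L}(z^0)}\right)$ applied to $F^{(k_1^0,0)}$ and $F^{(0,k_2^0)}$, with $r_j>1$ so that the geometric factor $r_1^{-s_1}r_2^{-s_2}$ forces the constant below $1$ once $s_1+s_2\ge s_0$, uniformly in $z^0$. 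Two small remarks: in your displayed intermediate inequality the denominator on the left should be $l_1^{k_1^0+s_1}(z^0)l_2^{s_2}(z^0)$ rather than $l_1^{s_1}(z^0)l_2^{s_2}(z^0)$ --- as written the right-hand side is short by the unbounded factor $l_1^{k_1^0}(z^0)$, so the claim that the prefactor is uniform in $z^0$ would fail, whereas with the corrected exponent your subsequent normalization goes through verbatim and coincides with the paper's computation; and your concluding case split, which yields $N(F,\mathbf{L},\mathbb{D}^2)\le 2n_0+s_0$, is actually more careful than the paper itself, which asserts $N\le n_0+s_0$ without addressing the orders with $p_1<k_1^0$ and $p_2<k_2^0$ simultaneously.
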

\begin{proof}
  Proof of Theorem \ref{petr1} implies that the inequality \eqref{net1}
  is true for some $k^0.$
  Therefore, we have
  \begin{gather*}
    \frac{p_0}{k_1^0!k_2^0!}  \frac{|F^{(k_1^0,k_2^0)}(z_1^0,z_2^0)|}{l_1^{k_1^0} (z_1^0,z_2^0)l_2^{k_2^0} (z_1^0,z_2^0)}\geq \\
  \geq
  \max\left\{ \frac{1}{k_1^0!k_2^0!}  \frac{|F^{(k_1^0,k_2^0)}(z_1,z_2)|}{l_1^{k_1^0} (z_1,z_2)l_2^{k_2^0} (z_1,z_2)}: z\in \mathbb{D}^2 \left[z^0,\frac{R}{\mathbf{L}(z^0)}\right] \right\} =
  \\
  =
  \max\left\{ \frac{|F^{(k_1^0,k_2^0)}(z_1,z_2)|}{k_1^0!k_2^0!}  \frac{l_1^{k_1^0} (z_1^0,z_2^0)l_2^{k_2^0} (z_1^0,z_2^0)}{l_1^{k_1^0} (z_1^0,z_2^0)l_2^{k_2^0} (z_1^0,z_2^0)l_1^{k_1^0} (z_1,z_2)l_2^{k_2^0} (z_1,z_2)}: z\in \mathbb{D}^2 \left[z^0,\frac{R}{\mathbf{L}(z^0)}\right] \right\} \geq
  \\
  \geq
  \max\left\{ \frac{|F^{(k_1^0,k_2^0)}(z_1,z_2)|}{k_1^0!k_2^0!}  \frac{{(\lambda_{2,1}(R)\lambda_{2,2}(R))}^{-n_0}}{l_1^{k_1^0} (z_1^0,z_2^0)l_2^{k_2^0} (z_1^0,z_2^0)}: z\in \mathbb{D}^2 \left[z^0,\frac{R}{\mathbf{L}(z^0)}\right] \right\}.
  \end{gather*}
  This inequality implies
  \begin{gather}
    \frac{p_0{(\lambda_{2,1}(R)\lambda_{2,2}(R))}^{n_0}}{k_1^0!k_2^0!}  \frac{|F^{(k_1^0,k_2^0)}(z_1^0,z_2^0)|}{l_1^{k_1^0} (z_1^0,z_2^0)l_2^{k_2^0} (z_1^0,z_2^0)}\geq \nonumber\\
  \geq
  \max\left\{ \frac{1}{k_1^0!k_2^0!}  \frac{|F^{(k_1^0,k_2^0)}(z_1,z_2)|}{l_1^{k_1^0} (z_1^0,z_2^0)l_2^{k_2^0} (z_1^0,z_2^0)}: z\in \mathbb{D}^2 \left[z^0,\frac{R}{\mathbf{L}(z^0)}\right] \right\}.
  \label{conc2}
    \end{gather}
    From \eqref{conc2} we obtain inequality \eqref{conc1} with
    $p=p_0{(\lambda_{2,1}(R)\lambda_{2,2}(R))}^{n_0}$.
  The necessity of condition \eqref{conc1} is proved.
  
Now we prove the sufficiency of \eqref{conc3} and \eqref{conc4}. Suppose that for every $R\in\mathcal{B}^2$ $\exists n_0 \in \mathbb{Z}_+, p>1 $ such that $\forall z_0 \in \mathbb{D}^2 $ and some
$ k_1^0\leq n_0, \ \ k_2^0\leq n_0$ the inequalities \eqref{conc3} and \eqref{conc4} hold.

We write Cauchy's formula as following 
$\forall z^0\in \mathbb{D}^2$ $\forall k^0_1\in \mathbb{Z}$ $\forall s \in \mathbb{Z}_+^2 $
	$$
	\frac{F^{(k_1^0+s_1,s_2)}(z_1^0,z_2^0)}{s_1!s_2!}=\frac{1}{(2\pi i)^2} \int_{T^2\left(z^0,\frac{R}{\mathbf{L}(z^0)}\right)} \frac{F^{(k_1^0,0)}(z_1,z_2)}{(z_1-z_1^0)^{s_1+1}(z_2-z_2^0)^{s_2+1}} dz_1dz_2.
	$$
 This yields 
\begin{gather*}
  \frac{|F^{(k_1^0+s_1,s_2)}(z_1^0,z_2^0)|}{s_1!s_2!}\leq \frac{1}{(2\pi)^2} \int_{\mathbb{T}^2\left(z^0,\frac{R}{\mathbf{L}(z^0)}\right)} \frac{|F^{(k_1,k_2)}(z_1,z_2)|}{|z_1-z_1^0|^{s_1+1}|z_2-z_2^0|^{s_2+1}} |dz_1||dz_2|
  \leq \\
  \\ \leq \frac{1}{(2\pi)^2} \int_{\mathbb{T}^2\left(z^0,\frac{R}{\mathbf{L}(z^0)}\right)} \max\{|F^{(k_1,0)}(z_1,z_2)|:z\in \mathbb{D}^2\} \frac{{l_1^{s_1+1}(z^0)}{l_2^{s_2+1}(z^0)}}{{r_1}^{s_1+1}{r_2}^{s_2+1}} |dz_1||dz_2| = \\
  =
  \max\{|F^{(k_1,0)}(z_1,z_2)|:z\in \mathbb{D}^2\} \frac{{l_1^{s_1}(z^0)}{l_2^{s_2}(z^0)}}{{r_1}^{s_1}{r_2}^{s_2}}.
\end{gather*}
Now we put $r_1=r_2=\beta$ and use \eqref{conc3} 
\begin{gather}
\frac{|F^{(k_1^0+s_1,s_2)}(z_1^0,z_2^0)|}{s_1!s_2!} \leq
 \frac{{l_1^{s_1}(z^0)}{l_2^{s_2}(z^0)}}{{\beta}^{s_1+s_2}}
 \max\{|F^{(k_1,0)}(z_1,z_2)|:z\in \mathbb{D}^2\}\leq \nonumber \\
  \leq
\frac{p{l_1^{s_1}(z^0)}{l_2^{s_2}(z^0)}}{{\beta}^{s_1+s_2}}
|F^{(k_1^0,0)}(z_1^0,z_2^0)| \label{eqa1}
\end{gather}
We choose $s_1+s_2\geq s_0$, where $\frac{p}{\beta^{s_0}} \leq 1$. 
Therefore \eqref{eqa1} implies 
\begin{gather*}
  \frac{|F^{(k_1^0+s_1,s_2)}(z_1^0,z_2^0)|}
  {{l_1^{k_1^0+s_1}(z_1^0,z_2^0}{l_2^{s_2}(z_1^0,z_2^0)}(k_1^0+s_1)!s_2!} \leq
  \frac{p}{{\beta}^{s_1+s_2}} \frac{s_1!k_1^0!}{(s_1+k_1^0)!}
  \frac{|F^{(k_1^0,0)}(z_1^0,z_2^0)|}
  {{l_1^{k_1^0}(z_1^0,z_2^0)}k_1^0!} \leq
  \frac{|F^{(k_1^0,0)}(z_1^0,z_2^0)|}{{l_1^{k_1^0}(z_1^0,z_2^0)}k_1^0!}.
\end{gather*}
Similarly
\begin{gather*}
 \frac{|F^{(s_1,k_2^0+s_2)}(z_1^0,z_2^0)|}
  {{l_1^{s_1}(z_1^0,z_2^0)}{l_2^{k_2^0+s_2}(z_1^0,z_2^0)}(k_2^0+s_2)!s_1!} \leq
    \frac{|F^{(0,k_2^0)}(z_1^0,z_2^0)|}{{l_2^{k_2^0}(z_1^0,z_2^0)}k_2^0!}.
\end{gather*}
Consequently, $N(F,\mathbf{L},\mathbb{D}^2)\leq n_0+s_0,$ as $k_1^0\leq n_0, k_2^0\leq n_0.$
\end{proof}

\begin{remark}
Note that necessity of Theorem \ref{cor1} was established by Bordulyak M. T. and Sheremeta M. M. \cite{bagzmin}, Bandura A. I., Bordulyak M. T. and Skaskiv O. B. \cite{sufjointdir} for entire functions of several variables.
But they did not obtain sufficiency in this case, although inequality \eqref{conc1} is necessary and sufficient condition of boundedness of $l$-index for functions of one variable \cite{sher,kusher,sherkuz}. Our restrictions \eqref{conc3}-\eqref{conc4} are corresponding multidimensional sufficient conditions. Moreover, assumptions \eqref{conc3} and \eqref{conc4} provide boundedness of $l_1$- and $l_2$-index in the directions $(1,0)$ and $(0,1)$ accordingly (see definition and properties for entire functions in \cite{BandSk,monograph}).  As a matter of fact, we implicitly deduce 
property similar to Theorem 6 in \cite{sufjointdir}. The theorem state that if an entire in $\mathbb{C}^n$ function $F$ has bounded $l_j$-index in a direction $e_j$ for every $j\in\{1, . . . , n\},$ then $F$ is of bounded $\mathbf{L}$-index in joint variables, 
where $\mathbf{L}=(l_1,\ldots,l_n),$  $\mathbf{e}_j=(0,\ldots,0, \underbrace{1}_{j-\mbox{th place}}, 0,\ldots,0).$
  \end{remark}

Denote $\tilde{\mathbf{L}}(z_1,z_2)=(\tilde{l}_1(z_1,z_2),\tilde{l}_2(z_1,z_2))$. $\mathbf{L}\asymp \tilde{\mathbf{L}}$ means that $\exists \varTheta_j=(\theta_{1,j},\theta_{2,j})\in \mathbb{R}_+^2,$ $j\in\{1,2\}$ such that $\forall (z_1,z_2) \in \mathbb{D}^2$
$$
\theta_{1,j}\tilde{l}_j(z_1,z_2) \leq l_j(z_1,z_2)\leq \theta_{2,j}\tilde{l}_j(z_1,z_2).
$$

 \begin{theorem} \label{petr2}
 Let $\mathbf{L} \in Q^2(\mathbb{D}^2)$ and $\mathbf{L}\asymp \tilde{\mathbf{L}}$. An analytic function $F$ in $\mathbb{D}^2$  has bounded  $\tilde{\mathbf{L}}$-index in joint variables if and only if it has bounded $\mathbf{L}$-index.
\end{theorem}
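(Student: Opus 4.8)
The plan is to read the statement off the criterion of Theorem~\ref{petr1}, applied once to $\mathbf{L}$ and once to $\tilde{\mathbf{L}}$, exploiting that the constants $\theta_{i,j}$ of the relation $\mathbf{L}\asymp\tilde{\mathbf{L}}$ enter only through powers $\theta_{i,j}^{k}$ with $k$ capped by a fixed index $n_0$. Since $\asymp$ is symmetric — from $\theta_{1,j}\tilde l_j\le l_j\le\theta_{2,j}\tilde l_j$ one gets $\theta_{2,j}^{-1}l_j\le\tilde l_j\le\theta_{1,j}^{-1}l_j$, so $\tilde{\mathbf{L}}\asymp\mathbf{L}$ with the reciprocal constants — the two implications are interchangeable, and it suffices to show that bounded $\mathbf{L}$-index forces bounded $\tilde{\mathbf{L}}$-index; the converse follows by swapping the roles of $\mathbf{L}$ and $\tilde{\mathbf{L}}$.

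First I would verify that $\tilde{\mathbf{L}}\in Q^2(\mathbb{D}^2)$, so that Theorem~\ref{petr1} is available for $\tilde{\mathbf{L}}$. Denoting by $\tilde\lambda_{1,j},\tilde\lambda_{2,j}$ the quantities \eqref{lam1}--\eqref{lam2} formed from $\tilde{\mathbf{L}}$, I would use the inclusion $\mathbb{D}^2[z^0,R/\tilde{\mathbf{L}}(z^0)]\subseteq\mathbb{D}^2[z^0,R'/\mathbf{L}(z^0)]$ with $r'_j=\theta_{2,j}r_j$ (a consequence of $\tilde l_j\ge l_j/\theta_{2,j}$) together with $\theta_{1,j}\le l_j/\tilde l_j\le\theta_{2,j}$ to obtain $\tilde\lambda_{2,j}(R)\le(\theta_{2,j}/\theta_{1,j})\lambda_{2,j}(R')$ and $\tilde\lambda_{1,j}(R)\ge(\theta_{1,j}/\theta_{2,j})\lambda_{1,j}(R')$. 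Since $\lambda_{i,j}$ are finite and positive on the admissible range, so are $\tilde\lambda_{i,j}$, whence $\tilde{\mathbf{L}}\in Q^2(\mathbb{D}^2)$.

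Next I apply the necessity part of Theorem~\ref{petr1} to $\mathbf{L}$: for each $R\in\mathcal{B}^2$ there are $n_0,p_0$ so that \eqref{net1} holds. To verify the $\tilde{\mathbf{L}}$-version of \eqref{net1} at a radius $\tilde R=(\tilde r_1,\tilde r_2)$, I would set $R=(\theta_{2,1}\tilde r_1,\theta_{2,2}\tilde r_2)$, which yields $\mathbb{D}^2[z^0,\tilde R/\tilde{\mathbf{L}}(z^0)]\subseteq\mathbb{D}^2[z^0,R/\mathbf{L}(z^0)]$. On this smaller polydisc, and for $k_1+k_2\le n_0$, the bounds $\tilde l_j^{-1}\le\theta_{2,j}l_j^{-1}$ and $l_j^{-1}\le\theta_{1,j}^{-1}\tilde l_j^{-1}$ give
\begin{gather*}
\frac{|F^{(k_1,k_2)}(z)|}{k_1!k_2!\,\tilde l_1^{k_1}(z)\tilde l_2^{k_2}(z)}\le\theta_{2,1}^{k_1}\theta_{2,2}^{k_2}\,\frac{|F^{(k_1,k_2)}(z)|}{k_1!k_2!\,l_1^{k_1}(z)l_2^{k_2}(z)},\\
\frac{|F^{(k_1^0,k_2^0)}(z^0)|}{k_1^0!k_2^0!\,l_1^{k_1^0}(z^0)l_2^{k_2^0}(z^0)}\le\theta_{1,1}^{-k_1^0}\theta_{1,2}^{-k_2^0}\,\frac{|F^{(k_1^0,k_2^0)}(z^0)|}{k_1^0!k_2^0!\,\tilde l_1^{k_1^0}(z^0)\tilde l_2^{k_2^0}(z^0)}.
\end{gather*}
As every exponent is at most $n_0$, the factors $\theta_{2,1}^{k_1}\theta_{2,2}^{k_2}$ and $\theta_{1,1}^{-k_1^0}\theta_{1,2}^{-k_2^0}$ are dominated by a constant $C$ depending only on $n_0$ and the $\theta_{i,j}$. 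Combining these estimates with \eqref{net1} produces the $\tilde{\mathbf{L}}$-analogue of \eqref{net1} with the same $n_0$, the same multi-index $(k_1^0,k_2^0)$, and $\tilde p_0=Cp_0$; the sufficiency part of Theorem~\ref{petr1}, applied now to $\tilde{\mathbf{L}}$, then gives that $F$ has bounded $\tilde{\mathbf{L}}$-index.

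I expect the only genuine obstacle to be the bookkeeping of radii: the rescaling $r'_j=\theta_{2,j}r_j$ must keep $R'$ (and $R$) inside the admissible box $\mathcal{B}^2$, which forces a restriction of $\tilde r_j$ to $(0,\beta/\max_j\theta_{2,j}]$ and the identification of the corresponding constant $\tilde\beta$ in the lower bound for $\tilde{\mathbf{L}}$. The conceptual point — and the reason the raw definition \eqref{net0} cannot be transferred directly — is that the comparison factor $\theta^{k}$ grows with the differentiation order $k$; it is only because Theorem~\ref{petr1} caps the relevant orders at the fixed level $n_0$ that these factors remain uniformly bounded.
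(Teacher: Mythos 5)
Your proposal is correct and follows essentially the same route as the paper: both reduce the statement to the criterion of Theorem \ref{petr1} and transfer inequality \eqref{net1} between $\mathbf{L}$ and $\tilde{\mathbf{L}}$, using that the comparison factors $\theta_{i,j}^{k}$ stay uniformly bounded because every exponent is capped by the fixed index $n_0$. The paper merely runs the implication in the opposite direction (from bounded $\tilde{\mathbf{L}}$-index to bounded $\mathbf{L}$-index), which, as you note, is equivalent by the symmetry of $\asymp$; your handling of the polydisc inclusions and of $\tilde{\mathbf{L}}\in Q^2(\mathbb{D}^2)$ is in fact more explicit than the paper's.
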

\begin{proof}
  It is easy to prove that if $\mathbf{L} \in Q^2(\mathbb{D}^2)$ and $\mathbf{L}\asymp \tilde{\mathbf{L}}$ then $\tilde{\mathbf{L}} \in Q^2(\mathbb{D}^2).$

  Let $N(F,\tilde{\mathbf{L}},\mathbb{D}^2)=\tilde{n}_0<+\infty$. Then by Theorem \ref{petr1}1 for every $\tilde{R}=(\tilde{r}_1,\tilde{r}_2)\in\mathcal{B}^2$ there exists $\tilde{p}\ge 1$ such that for each $z^0 \in \mathbb{D}^2$ and some $k^0$, $k_1^0+k_2^0\leq \tilde{n}_0,$ the inequality \eqref{net1} holds with $\tilde{\mathbf{L}}$ and $\tilde{R}$ instead of $\mathbf{L}$ and $R$.
  Hence
  \begin{gather*}
    \frac{\tilde{p}}{k_1^0!k_2^0!}  \frac{|F^{(k_1^0,k_2^0)}(z_1^0,z_2^0)|}{l_1^{k_1^0} (z_1^0,z_2^0)l_2^{k_2^0} (z_1^0,z_2^0)}=
     \frac{\tilde{p}}{k_1^0!k_2^0!}  \frac{\theta_{2,1}^{k_1^0}\theta_{2,2}^{k_2^0} |F^{(k_1^0,k_2^0)}(z_1^0,z_2^0)|}
     {\theta_{2,1}^{k_1^0}\theta_{2,2}^{k_2^0} l_1^{k_1^0} (z_1^0,z_2^0)l_2^{k_2^0} (z_1^0,z_2^0)} \geq
     \\ \geq
     \frac{\tilde{p}}{k_1^0!k_2^0!}  \frac{|F^{(k_1^0,k_2^0)}(z_1^0,z_2^0)|}
     {\theta_{2,1}^{k_1^0}\theta_{2,2}^{k_2^0} \tilde{l}_1^{k_1^0} (z_1^0,z_2^0)\tilde{l}_2^{k_2^0} (z_1^0,z_2^0)} \geq
     \\ \geq
     \frac{1}{\theta_{2,1}^{k_1^0}\theta_{2,2}^{k_2^0} }\max \left\{ \frac{|F^{(k_1,k_2)}(z_1,z_2)|}
     {k_1!k_2!\tilde{l}_1^{k_1} (z_1,z_2)\tilde{l}_2^{k_2} (z_1,z_2)}: k_1+k_2 \leq \tilde{n}_0, z \in \mathbb{D}^2\left[z^0, \frac{\tilde{R}}{\tilde{\mathbf{L}}(z)}\right] \right\} \geq
     \\ \geq
      \frac{1}{\theta_{2,1}^{k_1^0}\theta_{2,2}^{k_2^0} }\max \left\{ \frac{\theta_{1,1}^{k_1}\theta_{1,2}^{k_2} |F^{(k_1,k_2)}(z_1,z_2)|}
     {k_1!k_2!l_1^{k_1} (z_1,z_2)l_2^{k_2} (z_1,z_2)}: k_1+k_2 \leq \tilde{n}_0, z \in \mathbb{D}^2\left[z^0, \frac{\tilde{R}}{\tilde{\mathbf{L}}(z)}\right] \right\} \geq \\
     \geq
     \frac{\min \limits_{0\! \leq\! k_1+k_2\!\leq\! n_0
     } \{\theta_{1,1}^{k_1}\theta_{1,2}^{k_2} \}}{\theta_{2,1}^{k_1^0}\theta_{2,2}^{k_2^0} }
     \max \left\{ \frac{|F^{(k_1,k_2)}(z_1,z_2)|}
     {k_1!k_2!l_1^{k_1} (z_1,z_2)l_2^{k_2} (z_1,z_2)}: k_1\!+\!k_2\! \leq\! \tilde{n}_0, z \!\in\! \mathbb{D}^2\!\left[z^0, \frac{\tilde{R}}{\tilde{\mathbf{L}}(z)}\right] \right\}.
  \end{gather*}
  In view of Theorem \ref{petr1} we obtain that function $F$ has bounded $\mathbf{L}$-index.
\end{proof}
\begin{theorem}
 Let $\mathbf{L} \in Q^2(\mathbb{D}^2)$. An analytic function $F$ in $\mathbb{D}^2$  has bounded  $\tilde{\mathbf{L}}$-index in joint variables if and only if there exist $R\in\mathcal{B}^2,$ $n_0 \in \mathbb{Z}_+,$ $p_0>1$ such that for each $z^0 \in \mathbb{D}^2(z^0,R)$ and for some $k^0 \in \mathbb{Z}_+^2, k_1^0+k_2^0 \leq n_0$ the inequality \eqref{net1} holds.
\end{theorem}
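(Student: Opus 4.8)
The condition in the statement is \emph{verbatim} that of Theorem~\ref{petr1}, with the single change that inequality \eqref{net1} is now required to hold for \emph{one} radius $R\in\mathcal{B}^2$ rather than for every $R\in\mathcal{B}^2$. The heart of the proof is therefore a ``one radius suffices'' upgrade for the $\mathbf{L}$-index criterion; the appearance of $\tilde{\mathbf{L}}$ is then accounted for by Theorem~\ref{petr2}, which (when $\mathbf{L}\asymp\tilde{\mathbf{L}}$) converts boundedness of $\mathbf{L}$-index into boundedness of $\tilde{\mathbf{L}}$-index. Necessity is immediate: if $F$ has bounded $\tilde{\mathbf{L}}$-index, then by Theorem~\ref{petr2} it has bounded $\mathbf{L}$-index, and Theorem~\ref{petr1} supplies \eqref{net1} for every $R$, in particular for some $R$ (and with $p_0>1$ after enlarging $p_0$).

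For the sufficiency the first step is a reformulation. For $z\in\mathbb{D}^2$ set
\[
b(z)=\max\left\{\frac{|F^{(k_1,k_2)}(z_1,z_2)|}{k_1!k_2!\,l_1^{k_1}(z_1,z_2)l_2^{k_2}(z_1,z_2)}:0\le k_1+k_2\le n_0\right\}.
\]
The left-hand side of \eqref{net1} equals $\max\{b(z):z\in\mathbb{D}^2[z^0,R/\mathbf{L}(z^0)]\}$, while its right-hand side is at most $p_0\,b(z^0)$ for any admissible $k^0$ and equals $p_0\,b(z^0)$ for the optimal one. Hence, for a fixed radius $R$, condition \eqref{net1} is equivalent to the Harnack-type inequality
\[
b(z)\le p_0\,b(z^0)\qquad\text{for all } z^0\in\mathbb{D}^2 \text{ and } z\in\mathbb{D}^2\!\left[z^0,\tfrac{R}{\mathbf{L}(z^0)}\right],
\]
which I denote by $(\star)$. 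It therefore suffices to show that $(\star)$ for a single radius $R^*=(r_1^*,r_2^*)$ forces $(\star)$ for every $R\in\mathcal{B}^2$, with $p_0$ replaced by a larger constant \emph{independent of $z^0$}; Theorem~\ref{petr1} then gives bounded $\mathbf{L}$-index and Theorem~\ref{petr2} transfers it to $\tilde{\mathbf{L}}$-index.

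The second step is a chaining argument. Fix $R\in\mathcal{B}^2$. If $r_j\le r_j^*$ then $\mathbb{D}^2[z^0,R/\mathbf{L}(z^0)]\subset\mathbb{D}^2[z^0,R^*/\mathbf{L}(z^0)]$ and $(\star)$ is inherited directly, so I may assume $r_j>r_j^*$. Given $z^0$ and $w\in\mathbb{D}^2[z^0,R/\mathbf{L}(z^0)]$, I would join $z^0$ to $w$ by a finite chain $z^0=\zeta^{(0)},\zeta^{(1)},\dots,\zeta^{(m)}=w$, moving one coordinate at a time, with consecutive points satisfying $\zeta^{(i+1)}\in\mathbb{D}^2[\zeta^{(i)},R^*/\mathbf{L}(\zeta^{(i)})]$. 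Applying $(\star)$ at each link gives $b(\zeta^{(i+1)})\le p_0\,b(\zeta^{(i)})$, whence $b(w)\le p_0^{\,m}b(z^0)$. Since $\mathbf{L}\in Q^2(\mathbb{D}^2)$, every chain point $\zeta^{(i)}$ lies in $\mathbb{D}^2[z^0,R/\mathbf{L}(z^0)]$ and so $l_j(\zeta^{(i)})\le\lambda_{2,j}(R)\,l_j(z^0)$; thus each link advances the $j$-th coordinate by at least $r_j^*/(\lambda_{2,j}(R)l_j(z^0))$, while the total $j$-displacement is at most $r_j/l_j(z^0)$. Consequently $m\le\lceil r_1\lambda_{2,1}(R)/r_1^*\rceil+\lceil r_2\lambda_{2,2}(R)/r_2^*\rceil$, a bound independent of $z^0$ and $w$. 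Taking the maximum over $w$ yields $(\star)$ for the radius $R$ with constant $p_0^{\,m}$, completing the upgrade.

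The main obstacle is exactly the uniformity of the chain length $m$ in $z^0$ and $w$: the step sizes are dictated by $\mathbf{L}$ evaluated at the \emph{moving} centres $\zeta^{(i)}$, not at $z^0$, so one must control the oscillation of each $l_j$ along the chain. This is precisely what $\mathbf{L}\in Q^2(\mathbb{D}^2)$ delivers through the two-sided bounds $\lambda_{1,j}(R)\le l_j(z)/l_j(z^0)\le\lambda_{2,j}(R)$ on $\mathbb{D}^2[z^0,R/\mathbf{L}(z^0)]$; the same hypothesis, via $l_j(z)>\beta/(1-|z_j|)$ together with $r_j^*\le\beta$, guarantees that each small polydisc $\mathbb{D}^2[\zeta^{(i)},R^*/\mathbf{L}(\zeta^{(i)})]$ remains inside $\mathbb{D}^2$, so that $(\star)$ is legitimately applicable at every link.
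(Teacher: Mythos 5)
Your proof is correct, but it follows a genuinely different route from the paper's. You upgrade the hypothesis from one radius to all radii by a chaining argument: after recasting \eqref{net1} as the Harnack-type bound $b(z)\le p_0\,b(z^0)$ on $\mathbb{D}^2\left[z^0,R^*/\mathbf{L}(z^0)\right]$ (a reformulation that is valid precisely because of the existential ``for some $k^0$''), you iterate along a coordinate-wise chain whose length is bounded by $\lceil r_1\lambda_{2,1}(R)/r_1^*\rceil+\lceil r_2\lambda_{2,2}(R)/r_2^*\rceil$ uniformly in $z^0$ --- the cancellation of $l_j(z^0)$ in your step count is the crux, and it is right --- and then invoke Theorem~\ref{petr1} exactly as stated. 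The paper instead rescales the weight: it sets $\mathbf{L}^{*}=\left(\beta l_1/r_1,\,\beta l_2/r_2\right)$, so that the single given polydisc $\mathbb{D}^2\left[z^0,R/\mathbf{L}(z^0)\right]$ becomes $\mathbb{D}^2\left[z^0,R^0/\mathbf{L}^{*}(z^0)\right]$ with $R^0=(\beta,\beta)$, transfers \eqref{net1} to $\mathbf{L}^{*}$ at the cost of the factor $p_0\beta^{2n_0}/(r_1r_2)^{n_0}$, runs the sufficiency argument of Theorem~\ref{petr1} for $\mathbf{L}^{*}$ with this one radius, and returns from $\mathbf{L}^{*}$ to $\mathbf{L}$ via Theorem~\ref{petr2}, since $\mathbf{L}^{*}\asymp\mathbf{L}$. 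The trade-off: the paper's rescaling is shorter, but it tacitly relies on an internal feature of the proof of Theorem~\ref{petr1} --- the Cauchy-formula sufficiency argument there actually needs \eqref{net1} only for a single $R$ with both components greater than $1$, whereas the statement of Theorem~\ref{petr1} (``for each $R\in\mathcal{B}^2$'') does not literally license such an application; your chaining avoids this by producing \eqref{net1} for \emph{every} $R\in\mathcal{B}^2$ and using Theorem~\ref{petr1} as a black box, at the price of more bookkeeping. Your necessity direction coincides with the paper's trivial one, and your reading of $\tilde{\mathbf{L}}$ in the statement as a misprint for $\mathbf{L}$ agrees with the paper, whose own proof concludes boundedness of the $\mathbf{L}$-index.
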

\begin{proof}
  The sufficiency of this theorem follows from the sufficiency of Theorem \ref{petr1}.
  We prove the necessity.
  The proof of Theorem \ref{petr1} with $R=(\beta,\beta)$ implies that $N(F,L,\mathbb{D}^2)<+\infty.$
Let  $\mathbf{L}^{*}=(\frac{\beta l_1(z_1,z_2)}{r_1},\frac{\beta l_2(z_1,z_2)}{r_2}),$ $R^0=(\beta,\beta).$
  In general case from validity of \eqref{net1} for $F$ and $\mathbf{L}$ with $R=(r_1,r_2),$ $r_j<\beta,$ $j\in\{1,2\}$  we obtain 
  \begin{gather*}
    \max \left\{ \frac{|F^{(k_1,k_2)}(z_1,z_2)|}
     {k_1!k_2!(\beta l_1 (z_1,z_2)/r_1)^{k_1}(\beta l_2 (z_1,z_2)/r_2)^{k_2}}: k_1+k_2 \leq n_0, z \in \mathbb{D}^2\left[z^0, \frac{R_0}{\mathbf{L}^*(z^0)}\right] \right\} \leq \\
     \leq
      \max \left\{ \frac{|F^{(k_1,k_2)}(z_1,z_2)|}
     {k_1!k_2!l_1^{k_1} (z_1,z_2)l_2^{k_2} (z_1,z_2)}: k_1+k_2 \leq n_0,  z\! \in\! \mathbb{D}^2\left[z^0, \frac{R}{\mathbf{L}(z^0)}\right] \right\} \leq
     \\
     \leq 
     \frac{p_0}{k_1^0!k_2^0!} \frac{|F^{(k_1^0,k_2^0)}(z_1^0,z_2^0)|}{l_1^{k_1^0} (z_1^0,z_2^0)l_2^{k_2^0} (z_1^0,z_2^0)}
     =
     \frac{\beta^{k_1^0+k_2^0}p_0}{r_1^{k_1^0}r_2^{k_2^0}k_1^0!k_2^0!} \frac{|F^{(k_1^0,k_2^0)}(z_1,z_2)|}{(\beta l_1(z_1^0,z_2^0)/r_1)^{k_1^0}(\beta l_2 (z_1^0,z_2^0)/r_2)^{k_2^0}}<\\
     < \frac{p_0\beta^{2n_0}}{(r_1r_2)^{n_0}} \frac{|F^{(k_1^0,k_2^0)}(z_1,z_2)|}{k_1^0!k_2^0!(\beta l_1(z_1^0,z_2^0)/r_1)^{k_1^0}(\beta l_2 (z_1^0,z_2^0)/r_2)^{k_2^0}}.
       \end{gather*}
       i. e. \eqref{net1} holds for $F,$  $\mathbf{L}^*$ and $R=(\beta,\beta).$
       Now as above for $R=(\beta,\beta)$ we apply Theorem \ref{petr1} for function $F(z_1,z_2)$ and $\mathbf{L}^{*}(z_1,z_2)=(\frac{l_1(z_1,z_2)}{r_1},\frac{l_2(z_1,z_2)}{r_2})$. This implies that $F$ is of bounded $\mathbf{L}^{*}$-index in joint variables. Therefore, by Theorem \ref{petr2} the function $F$ is of bounded $\mathbf{L}$-index in joint variables.
\end{proof}

  \noindent{\bf 4. Estimate of maximum modulus on a bidisc.}

For an entire function $F(z)$ we put
$$M(R,z^0,F)=\max\{|F(z)|\colon  z\in \mathbb{T}^2(z^0,R)\}.$$ Then $M(R,z^0,F)=\max\{|F(z)|  \colon z\in \mathbb{D}^2[z^0,R]\},$ because the maximum modulus for an entire function in a closed polydisc is  attained on its skeleton.

\begin{theorem} \label{petr4}
 Let $\mathbf{L} \in Q^2(\mathbb{D}^2)$. An analytic function $F$ in $\mathbb{D}^2$  has bounded  $\mathbf{L}$-index in joint variables if and only if for any $R',R'' \in \mathbb{R}_+^2$, $\mathbf{0}<R'<R''\leq (\beta,\beta)$ there exists $p_1=p_1(R',R'')\geq 1$ such that for each $z^0 \in \mathbb{D}^2$
\begin{gather}\label{th4main}
M\left(\frac{R''}{\mathbf{L}(z^0)},z^0,F\right)\leq
pM\left(\frac{R'}{\mathbf{L}(z^0)},z^0,F\right).
\end{gather} 
\end{theorem}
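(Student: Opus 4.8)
The plan is to prove both implications by coupling the derivative-behaviour criterion of Theorem~\ref{petr1} with Cauchy's integral formula, using crucially that the maximum of $|F|$ over a closed polydisc is attained on its skeleton, so that $M(R/\mathbf{L}(z^0),z^0,F)=\max\{|F(z)|:z\in\mathbb{D}^2[z^0,R/\mathbf{L}(z^0)]\}$.

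For the necessity, suppose $F$ has bounded $\mathbf{L}$-index. Applying Theorem~\ref{petr1} with $R=R''$ and retaining only the term $(k_1,k_2)=(0,0)$ inside the maximum on the left of \eqref{net1}, I obtain
$$M\!\left(\tfrac{R''}{\mathbf{L}(z^0)},z^0,F\right)\le \frac{p_0}{k_1^0!k_2^0!}\,\frac{|F^{(k_1^0,k_2^0)}(z_1^0,z_2^0)|}{l_1^{k_1^0}(z^0)l_2^{k_2^0}(z^0)}$$
for some $k^0$ with $k_1^0+k_2^0\le n_0$. It then remains to bound the right-hand side by $M(R'/\mathbf{L}(z^0),z^0,F)$; this follows from Cauchy's formula on the skeleton $\mathbb{T}^2(z^0,R'/\mathbf{L}(z^0))$, which yields $\frac{|F^{(k_1^0,k_2^0)}(z^0)|}{k_1^0!k_2^0!l_1^{k_1^0}(z^0)l_2^{k_2^0}(z^0)}\le (r_1')^{-k_1^0}(r_2')^{-k_2^0}M(R'/\mathbf{L}(z^0),z^0,F)$. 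Since $k_1^0,k_2^0\le n_0$, the factor $(r_1')^{-k_1^0}(r_2')^{-k_2^0}$ is bounded by $\max\{1,(r_1')^{-n_0}\}\max\{1,(r_2')^{-n_0}\}$ uniformly in $z^0$, giving \eqref{th4main} with $p=p_0\max\{1,(r_1')^{-n_0}\}\max\{1,(r_2')^{-n_0}\}$.

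For the sufficiency, I would fix radii $1<r_1''=r_2''\le\beta$ and $R'<R''$ and exploit \eqref{th4main} through Cauchy's estimates. Cauchy's formula on $\mathbb{T}^2(z^0,R''/\mathbf{L}(z^0))$ together with \eqref{th4main} gives, for every $(p_1,p_2)$,
$$\frac{|F^{(p_1,p_2)}(z^0)|}{p_1!p_2!\,l_1^{p_1}(z^0)l_2^{p_2}(z^0)}\le \frac{p\,M(R'/\mathbf{L}(z^0),z^0,F)}{(r_1'')^{p_1}(r_2'')^{p_2}}.$$
The key idea is to feed this bound into the Taylor expansion of $F$ at $z^0$ to estimate $M(R'/\mathbf{L}(z^0),z^0,F)$ itself. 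Splitting the series at $k_1+k_2=n_0$, the low-order part is controlled by $C_1\,b_{n_0}(z^0)$, where $b_m(z^0)=\max_{k_1+k_2\le m}\frac{|F^{(k_1,k_2)}(z^0)|}{k_1!k_2!l_1^{k_1}(z^0)l_2^{k_2}(z^0)}$ and $C_1$ is a finite combinatorial constant, while the tail is dominated, via the displayed inequality, by $pM(R'/\mathbf{L}(z^0),z^0,F)\sum_{k_1+k_2>n_0}(r_1'/r_1'')^{k_1}(r_2'/r_2'')^{k_2}$. Because $r_j'/r_j''<1$, choosing $n_0$ large makes this tail sum smaller than $1/(2p)$, which (using that $M(R'/\mathbf{L}(z^0),z^0,F)<\infty$ by analyticity) absorbs the tail and yields $M(R'/\mathbf{L}(z^0),z^0,F)\le 2C_1 b_{n_0}(z^0)$.

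Substituting this back into the displayed Cauchy estimate gives $\frac{|F^{(p_1,p_2)}(z^0)|}{p_1!p_2!l_1^{p_1}l_2^{p_2}}\le 2pC_1 b_{n_0}(z^0)(r_1'')^{-p_1}(r_2'')^{-p_2}$. Since $r_1'',r_2''>1$, there is $s_0$ (independent of $z^0$) with $(r_1'')^{p_1}(r_2'')^{p_2}\ge 2pC_1$ whenever $p_1+p_2\ge s_0$, so the normalized derivative is $\le b_{n_0}(z^0)$ there; for $p_1+p_2<s_0$ it is trivially $\le b_N(z^0)$ with $N=\max\{n_0,s_0\}$. Hence \eqref{net0} holds with index $\le N$, uniformly in $z^0$. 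I expect the main obstacle to be the sufficiency: the estimate of $M(R'/\mathbf{L}(z^0),z^0,F)$ is self-referential and closes only because the geometric tail can be made small, which forces the coordinated choice $r_j'<r_j''$ together with $r_j''>1$; verifying that all constants ($C_1$, $n_0$, $s_0$, $p$) are genuinely independent of $z^0$ is the delicate bookkeeping.
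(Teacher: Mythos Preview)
Your argument is correct in both directions, but the route differs markedly from the paper's.

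For the \emph{necessity}, the paper argues by contradiction: assuming \eqref{th4main} fails for some $R',R''$, it invokes Theorem~\ref{cor1} to obtain a $k^0$ with $M(R''/\mathbf{L}(z^0),z^0,F^{(k^0)})\le p_0|F^{(k^0)}(z^0)|$, then integrates $F^{(j_1+1,j_2)}$ along radial segments to build a lower bound for $|F^{(k^0)}(z^*_{k^0})|$ step by step, eventually forcing $p_*$ to be bounded by an explicit constant and reaching a contradiction. Your direct approach---apply Theorem~\ref{petr1} with $R=R''$, keep only the $(0,0)$ term on the left of \eqref{net1}, then bound the resulting normalized derivative by Cauchy on $\mathbb{T}^2(z^0,R'/\mathbf{L}(z^0))$---is considerably shorter and avoids the path-integration machinery entirely. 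It buys simplicity; the paper's argument, on the other hand, yields slightly more explicit information about how the constant $p$ depends on $N,R',R''$.

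For the \emph{sufficiency}, the paper uses a Wiman--Valiron style argument: it introduces the maximal term $\mu(R,z^0,F)$ and central index $\nu(R,z^0,F)$ of the Taylor expansion at $z^0$, combines $\mu\le M$ with the geometric estimate $M(R'R,z^0,F)\le\prod_j(1-r_j')^{-1}\mu(R,z^0,F)$ and the scaling inequality $\ln\mu(R)\le\ln\mu(R''R)-\|\nu(R)\|\ln\min_j r_j''$, and then reads off a uniform bound on $\|\nu(1/\mathbf{L}(z^0))\|\ge N(F,z^0,\mathbf{L})$ from \eqref{th4main}. Your approach instead closes a self-referential estimate on $M(R'/\mathbf{L}(z^0),z^0,F)$ by splitting the Taylor series at level $n_0$ and absorbing the geometric tail (which is legitimate because $r_j'/r_j''<1$ and all constants $p,C_1,n_0,s_0$ are fixed once $R',R''$ are chosen with $r_j''>1$, independently of $z^0$). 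Both arguments are valid; yours is more elementary and self-contained, while the paper's connects the result to the central-index framework used elsewhere in the theory.
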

\begin{proof}
 Let $N(F,L,\mathbb{D}^2)=N<+\infty.$ Suppose that inequality \eqref{th4main} does not hold i.e. there exist
	$R',$ $R'',$ $\mathbf{0}<R'<R'',$ such that for each $p_*\geq 1$ and for some $z^0=z^0(p_*)$
 \begin{gather}\label{th4f}
M\left(\frac{R''}{\mathbf{L}(z^0)},z^0,F\right)>
p_*M\left(\frac{R'}{\mathbf{L}(z^0)},z^0,F\right).
\end{gather}
By Theorem \ref{cor1} there exists a number $p_0=p_0(R'')\geq 1$ such that for every $z^0\in\mathbb{D}^2$ and for some $k^0\in\mathbb{Z}^2_{+},$ $k_1^0+k_2^0 \leq N,$  one has
	\begin{equation}
	\label{th4s}
	M\left(\frac{R''}{\mathbf{L}(z^0)},z^0,F^{(k_1^0,k_2^0)}\right) \leq p_0 |F^{(k_1^0,k_2^0)}(z^0)|.
	\end{equation}
	We put
\begin{gather*}
b_1=p_0N!
\left(\frac{r''_1r''_2}{r'_1r'_2}\right)^{N}
\lambda_{2,1}^{N}(R'')\lambda_{2,2}^{N}(R'')
\sum_{j=1}^{N} \frac{(N-j)!}{(r_1^{''})^{j}}
 \\
  b_2=p_0
\lambda_{2,2}^{N}(R'')
\sum_{j=1}^{N} \frac{(N-j)!}{(r_2^{''})^{j}}\max\left\{ \frac{1}{(r_1^{''})^N},1 \right\}
  \\
  p_*=p_0(N!)^2\left(\frac{r_1^{''}r_2^{''}}{r_1^{'}r_2^{'}}\right)^N+b_1+b_2+1.
\end{gather*}

	Let $z^0=z^0(p_*)$ be a point for which  inequality \eqref{th4f} holds and $k^0$ is such for which \eqref{th4s} holds. We choose $z^*$ and $z_{(j_1,j_2)}^*$ such that
\begin{gather*}
 M\left(\frac{R'}{\mathbf{L}(z^0)},z^0,F\right)=|F(z^*)|, \;
 M\left(\frac{R''}{\mathbf{L}(z^0)},z^0,F^{(j_1,j_2)}\right)=|F^{(j_1,j_2)}(z_{(j_1,j_2)}^*)|
\end{gather*}
for every $j=(j_1,j_2)\in\mathbb{Z}^2_{+},$ $j_1+j_2\leq N.$
	We apply Cauchy's inequality
\begin{gather}\label{th4thi}
  |F^{(j_1,j_2)}(z^0)|\leq j_1!j_2!\left( \frac{l_1(z_1^0,z_2^0)}{r_1^{'}}  \right)^{j_1} \left( \frac{l_2(z_1^0,z_2^0)}{r_2^{'}} \right)^{j_2}|F(z^*)|
\end{gather}
for estimate the difference
\begin{gather}
  |F^{(j_1,j_2)}(z_{j,1}^{*},z_{j,2}^{*})-F^{(j_1,j_2)}(z_{1}^0,z_{j,2}^{*})|=\left |\int_{z_1^0}^{z_{j,1}^{*}} F^{(j_1+1,j_2)}(\zeta,z_{j,2}^{*})d\zeta\right| \leq \nonumber \\
  \leq
\int_{z_1^0}^{z_{j,1}^{*}}\max \left\{ |F^{(j_1+1,j_2)}(\zeta,z_{j,2}^{*})|:|\zeta-z_1^0|=
  \frac{r_1^{''}}{l_1(z_0)} \right\} |d \zeta|=
 |F^{(j_1+1,j_2)}(z^*_{(j_1+1,j_2)})|\frac{r_1^{''}}{l_1(z^0)}.
  \label{mes}
\end{gather}
Since $(z_1^0,z_{j,2}^{*}) \in \mathbb{D}^2 \left[ z^0, \frac{R''}{\mathbf{L}(z^0)} \right]  $
and
for all $k=1,2$ we have that
$|z_{j,k}^{*}-z_k^0|=\frac{r_k^{''}}{l_k(z^0)} $
and
$l_k(z_1^0,z_{j,2}^{*}) \leq \lambda_{2,k}(R{''})l_k(z^0)$. 
Putting $j=k^0$ in \eqref{th4thi}, by Theorem \ref{petr1} we obtain that
\begin{gather}
|F^{(j_1,j_2)}(z_1^0,z_{j,2}^{*})|\leq
j_1!j_2! p_0|F^{(k^0)}(z^0)|
   \frac{l_1^{j_1}(z_1^0,z_{j,2}^{*})l_2^{j_2}(z_1^0,z_{j,2}^{*})}{k^0_1!k_2^0!l^{k_1^0}_1(z_1^0,z_2^0)l^{k_2^0}_2(z_1^0,z_2^0)}
   \leq \nonumber \\
   \leq
   \frac{j_1!j_2!\lambda_{2,1}^{j_1}(R'')\lambda_{2,2}^{j_2}(R'')
   l_1^{j_1}(z^0)l_2^{j_2}(z^0)}{k^0_1!k_2^0!l^{k_1^0}_1(z_1^0,z_2^0)l^{k_2^0}_2(z_1^0,z_2^0)}p_0k_1^0!k_2^0!
  \left( \frac{l_1(z_1^0,z_2^0)}{r_1^{'}}  \right)^{k_1^0} \left( \frac{l_2(z_1^0,z_2^0)}{r_2^{'}} \right)^{k^0_2}|F(z^{*})| \leq \nonumber \\
   \leq
   j_1!j_2!\lambda_{2,1}^{j_1}(R'')\lambda_{2,2}^{j_2}(R'')p_0
    \frac{l_1^{j_1}(z_1^0,z_2^0)l_2^{j_2}(z_1^0,z_2^0)}{(r_1')^{k_1^0}(r_2')^{k_2^0}} |F(z^*)|.
   \label{th4ni}
\end{gather}
From inequalities \eqref{mes} and \eqref{th4ni}  it follows that
\begin{gather*}
 |F^{(j_1+1,j_2)}(z^*_{(j_1+1,j_2)})|\geq \frac{l_1(z^0)}{r_1^{''}}
(|F^{(j_1,j_2)}(z_{j,1}^{*},z_{j,2}^{*})|-|F^{(j_1,j_2)}(z_1^0,z_{j,2}^{*})|)\geq \nonumber \\
\geq
\frac{l_1(z^0)}{r_1^{''}}(|F^{(j_1,j_2)}(z_{j,1}^{*},z_{j,2}^{*})|-
j_1!j_2!\lambda_{2,1}^{j_1}(R'')\lambda_{2,2}^{j_2}(R'')p_0
   \frac{l_1^{j_1}(z_1^0,z_2^0)l_2^{j_2}(z_1^0,z_2^0)}{(r_1')^{k_1^0}(r_2')^{k_2^0}}  |F(z^*)|)= \nonumber \\
   =\frac{l_1(z^0)}{r_1^{''}}|F^{(j_1,j_2)}(z_{j,1}^{*},z_{j,2}^{*})|-
j_1!j_2!\lambda_{2,1}^{j_1}(R'')\lambda_{2,2}^{j_2}(R'')p_0l_1(z^0)
    \frac{l_1^{j_1}(z_1^0,z_2^0)l_2^{j_2}(z_1^0,z_2^0)}{r_1''(r_1')^{k_1^0}(r_2')^{k_2^0}} |F(z^*)|.
\end{gather*}
We choose $j=(j_1,j_2)=(k_1^0,k_2^0)$ and deduce 
\begin{gather}
|F^{(k_1^0,k_2^0)}(z_{k^0}^{*})|\geq \frac{l_1(z^0)}{r_1^{''}}
|F^{(k_1^0-1,k_2^0)}(z_{(k_1^0-1,k_2^0)}^{*})|-
\frac{p_0(k_1^0-1)!k_2^0!l_1^{k_1^0}(z_1^0,z_2^0)l_2^{k_2^0}(z_1^0,z_2^0)}{r_1^{''}(r'_1)^{k_1^0}(r'_2)^{k_2^0}} \times \nonumber \\ 
\!\times\! \lambda_{2,1}^{j_1}(R'')\lambda_{2,2}^{j_2}(R'')|F(z^{*})| 
\!\geq\!
\frac{l_1^2(z^0)}{(r_1^{''})^2}|F^{(k_1^0-2,k_2^0)}(z_{(k_1^0-2,k_2^0)}^{*})|\!-\!
\frac{p_0(k_1^0-2)!k_2^0!l_1^{k_1^0}(z_1^0,z_2^0)l_2^{k_2^0}(z_1^0,z_2^0)}{(r_1^{''})^2(r'_1)^{k_1^0}(r'_2)^{k_2^0}} \times \nonumber \\ \times 
\lambda_{2,1}^{j_1}(R'')\lambda_{2,2}^{j_2}(R'')|F(z^{*})|- 
\frac{p_0(k_1^0-1)!k_2^0!l_1^{k_1^0}(z_1^0,z_2^0)l_2^{k_2^0}(z_1^0,z_2^0)}{r_1^{''}(r'_1)^{k_1^0}(r'_2)^{k_2^0}}
\lambda_{2,1}^{j_1}(R'')\lambda_{2,2}^{j_2}(R'')|F(z^{*})|
\geq \nonumber
\\
\geq \ldots \geq
\frac{l_1^{k_1^0}(z^0)}{(r_1^{''})^{k_1^0}}|F^{(0,k_2^0)}(z_{0,k_2^0}^{*})|-
\frac{p_0k_2^0!l_1^{k_1^0}(z_1^0,z_2^0)l_2^{k_2^0}(z_1^0,z_2^0)}{(r_1^{''})^{k_1^0}(r'_1)^{k_1^0}(r'_2)^{k_2^0}}
\lambda_{2,1}^{j_1}(R'')\lambda_{2,2}^{j_2}(R'')|F(z^{*})|-\ldots -
\nonumber \\ -
\frac{p_0(k_1^0-2)!k_2^0!l_1^{k_1^0}(z_1^0,z_2^0)l_2^{k_2^0}(z_1^0,z_2^0)}{(r_1^{''})^2(r'_1)^{k_1^0}(r'_2)^{k_2^0}}
\lambda_{2,1}^{j_1}(R'')\lambda_{2,2}^{j_2}(R'')|F(z^{*})|-\nonumber \\ -
\frac{p_0(k_1^0-1)!k_2^0!l_1^{k_1^0}(z_1^0,z_2^0)l_2^{k_2^0}(z_1^0,z_2^0)}{r_1^{''}(r'_1)^{k_1^0}(r'_2)^{k_2^0}}
\lambda_{2,1}^{j_1}(R'')\lambda_{2,2}^{j_2}(R'')|F(z^{*})|
= \nonumber
\\
\!=\!\frac{l_1^{k_1^0}(z^0)}{(r_1^{''})^{k_1^0}}|F^{(0,k_2^0)}(z_{(0,k_2^0)}^{*})|\!-\!
\frac{p_0k_2^0!l_1^{k_1^0}(z_1^0,z_2^0)l_2^{k_2^0}(z_1^0,z_2^0)}{(r'_1)^{k_1^0}(r'_2)^{k_2^0}}
\lambda_{2,1}^{j_1}(R'')\lambda_{2,2}^{j_2}(R'')|F(z^{*})|
\sum_{j_1=1}^{k_1^0} \frac{(k_1^0-j_1)!}{(r_1^{''})^{j_1}} \geq
\nonumber \\
\!\geq\!
\frac{l_1^{k_1^0}(z^0)}{(r_1^{''})^{k_1^0}}\frac{l_2^{k_2^0}(z^0)}{(r_2^{''})^{k_2^0}}
|F(z_{(0,0)}^{*})|\!-\!
\frac{p_0k_2^0!l_1^{k_1^0}(z_1^0,z_2^0)l_2^{k_2^0}(z_1^0,z_2^0)}{(r'_1)^{k_1^0}(r'_2)^{k_2^0}}
\lambda_{2,1}^{k_1^0}(R'')\lambda_{2,2}^{k_2^0}(R'')|F(z^{*})|
\sum_{j_1=1}^{k_1^0} \frac{(k_1^0-j_1)!}{(r_1^{''})^{j_1}}
-\nonumber
\\
-\frac{p_0l_1^{k_1^0}(z_1^0,z_2^0)l_2^{k_2^0}(z_1^0,z_2^0)}{(r'_1)^{k_1^0}(r'_2)^{k_2^0}(r_1^{''})^{k_1^0}}
\lambda_{2,2}^{k_2^0}(R'')|F(z^{*})|
\sum_{j_2=1}^{k_2^0} \frac{(k_2^0-j_2)!}{(r_2^{''})^{j_2}}
=\nonumber \\
=\frac{l_1^{k_1^0}(z_1^0,z_2^0)l_2^{k_2^0}(z_1^0,z_2^0)}{(r''_1)^{k_1^0}(r''_2)^{k_2^0}}|F(z_{(0,0)}^{*})|-|F(z^{*})|(\tilde{b}_1+\tilde{b}_2),
\label{th4el}
\end{gather}
where 
\begin{gather}
  \tilde{b}_1=\frac{p_0k_2^0!l_1^{k_1^0}(z_1^0,z_2^0)l_2^{k_2^0}(z_1^0,z_2^0)}{(r'_1)^{k_1^0}(r'_2)^{k_2^0}}
\lambda_{2,1}^{k_1^0}(R'')\lambda_{2,2}^{k_2^0}(R'')
\sum_{j_1=1}^{k_1^0} \frac{(k_1^0-j_1)!}{(r_1^{''})^{j_1}}=
\nonumber \\
=p_0k_2^0!\frac{l_1^{k_1^0}(z_1^0,z_2^0)l_2^{k_2^0}(z_1^0,z_2^0)}{(r''_1)^{k_1^0}(r''_2)^{k_2^0}}
\frac{(r''_1)^{k_1^0}(r''_2)^{k_2^0}}{(r'_1)^{k_1^0}(r'_2)^{k_2^0}}
\lambda_{2,1}^{k_1^0}(R'')\lambda_{2,2}^{k_2^0}(R'')
\sum_{j_1=1}^{k_1^0} \frac{(k_1^0-j_1)!}{(r_1^{''})^{j_1}}\leq
\nonumber
\\
\leq
p_0N!\frac{l_1^{k_1^0}(z_1^0,z_2^0)l_2^{k_2^0}(z_1^0,z_2^0)}{(r''_1)^{k_1^0}(r''_2)^{k_2^0}}
\left(\frac{r''_1r''_2}{r'_1r'_2}\right)^{N}
\lambda_{2,1}^{N}(R'')\lambda_{2,2}^{N}(R'')
\sum_{j=1}^{N} \frac{(N-j)!}{(r_1^{''})^{j}}\!=\!\frac{l_1^{k_1^0}(z_1^0,z_2^0)l_2^{k_2^0}(z_1^0,z_2^0)}{(r''_1)^{k_1^0}(r''_2)^{k_2^0}}b_1,\nonumber \\ 
\tilde{b}_2=\frac{p_0}{(r_1^{''})^{k_1^0}}\frac{l_1^{k_1^0}(z_1^0,z_2^0)l_2^{k_2^0}(z_1^0,z_2^0)}{(r''_1)^{k_1^0}(r''_2)^{k_2^0}}
\lambda_{2,2}^{k_2^0}(R'')
\sum_{j_2=1}^{k_2^0} \frac{(k_2^0-j_2)!}{(r_2^{''})^{j_2}}\leq
\nonumber \\
\!\leq\! p_0\frac{l_1^{k_1^0}(z_1^0,z_2^0)l_2^{k_2^0}(z_1^0,z_2^0)}{(r''_1)^{k_1^0}(r''_2)^{k_2^0}}
\lambda_{2,2}^{N}(R'')
\sum_{j=1}^{N} \frac{(N-j)!}{(r_2^{''})^{j}}\max\left\{ \frac{1}{(r_1^{''})^N},1 \right\}\!=\!\frac{l_1^{k_1^0}(z_1^0,z_2^0)l_2^{k_2^0}(z_1^0,z_2^0)}{(r''_1)^{k_1^0}(r''_2)^{k_2^0}}b_2.
\label{th4thieen}
\end{gather}
The inequality \eqref{th4el} implies that
\begin{gather*}
|F^{(k^0_1,k^0_2)}(z_{(k^0_1,k^0_2)}^{*})|\geq \frac{l_1^{k_1^0}(z_1^0,z_2^0)l_2^{k_2^0}(z_1^0,z_2^0)}{(r''_1)^{k_1^0}(r''_2)^{k_2^0}}|F(z^{*})|\left( \frac{|F(z^{*}_{(0,0)})|}{|F(z^{*})|}-(b_1+b_2) \right).
\end{gather*}
In view of \eqref{th4f} we have that $\frac{|F(z^{*}_{(0,0)})|}{|F(z^{*})|} \geq p_{*}>b_1+b_2. $ Hence, applying \eqref{th4thi} and \eqref{th4s} to 
\eqref{th4thieen}, we deduce 
\begin{gather*}
|F^{(k^0_1,k^0_2)}(z_{(k^0_1,k^0_2)}^{*})|\geq \frac{l_1^{k_1^0}(z_1^0,z_2^0)l_2^{k_2^0}(z_1^0,z_2^0)}{(r''_1)^{k_1^0}(r''_2)^{k_2^0}}|F(z^{*})|( p_{*}-(b_1+b_2) )\geq
\nonumber \\
\geq
 \frac{l_1^{k_1^0}(z_1^0,z_2^0)l_2^{k_2^0}(z_1^0,z_2^0)}{(r''_1)^{k_1^0}(r''_2)^{k_2^0}}( p_{*}-(b_1+b_2) )\frac{|F^{(k^0_1,k^0_2)}(z^0)|(r'_1)^{k_1^0}(r'_2)^{k_2^0}}{k^0_1!k^0_2!l_1^{k_1^0}(z_1^0,z_2^0)l_2^{k_2^0}(z_1^0,z_2^0)}\geq
 \nonumber \\ \geq
 \left(\frac{r_1^{'}r_2^{'}}{r_1^{''}r_2^{''}}\right)^N(p_{*}-(b_1+b_2))
 \frac{|F^{(k^0_1,k^0_2)}(z_{(k^0_1,k^0_2)}^{*})|}{p_0(N!)^2}.
\end{gather*}
Therefore,
$p_{*}\leq p_0(N!)^2\left(\frac{r_1^{''}r_2^{''}}{r_1^{'}r_2^{'}}\right)^N+b_1+b_2,$
but it contradicts of choice $p_{*}=p_0(N!)^2\left(\frac{r_1^{''}r_2^{''}}{r_1^{'}r_2^{'}}\right)^N+b_1+b_2+1.$
The necessity is proved. 

Now we prove a sufficiency.
Let $z^0\in\mathbb{D}^2$ be an arbitrary point. We expand a function $F$ in power series in $\mathbb{D}^2(z^0,R)$
	\begin{equation}
	\label{th4suf1}
	F(z)=\sum_{k\geq\mathbf{0}}b_k(z-z^0)^k= \sum_{k_1\ge 0, k_2\geq 0}b_{k_1,k_2}(z_1-z_1^0)^{k_1}(z_2-z_2^0)^{k_2},
	\end{equation}
where $k=(k_1,k_2),$ $b_k=b_{k_1,k_2}=\frac{F^{(k_1,k_2)}(z^0_1,z_2^0)}{k_1!k_2!},$ $R=(r_1,r_2).$

Let $\mu(R,z^0,F)=\max\{|b_k|R^k\colon  \ k\geq \mathbf{0}\}=\max\{|b_{k_1,k_2}|r_1^{k_1}r_2^{k_2}\colon  \ k_1\geq 0, k_2\ge 0\} $ be a maximal term of series \eqref{th4suf1} and $\nu(R)=\nu(R,z^0,F)=(\nu_1^0(R),\nu_2^0(R))$ be a set of indices such that
	$$
	\mu(R,z^0,F)=|b_{\nu(R)}|R^{\nu(R)},$$
$$
	\|\nu(R)\|=\nu_1(R)+\nu_2(R)=\max \{k_1+k_2\colon  k_1\geq 0,k_2\geq0, \  |b_k|R^k=\mu(R,z^0,F) \}.
	$$
We apply Cauchy's inequality
$$
\forall R=(r_1,r_2), 0<r_j<1,  j\in\{1,2\} \colon \ \mu(R,z^0,F)\leq M(R,z^0,F).
$$
Choosing $R'$ and $R'',$ \ $0<r'_j<1,$  $1<r''_j<\beta,$ we conclude 
\begin{gather*}
 M(R'R,z^0,F)\leq \sum_{k\geq\mathbf{0}}|b_k|(R'R)^k\leq
\sum_{k\geq\mathbf{0}}\mu(R,z^0,F)(R')^k=\mu(R,z^0,F)\sum_{k\geq\mathbf{0}}(R')^k= \nonumber \\
=\prod_{j=1}^{2}\frac{1}{1-r'_j}\mu(R,z^0,F).
\end{gather*}
Besides, 
\begin{gather*}
  \ln \mu(R,z^0,F)=\ln \{|b_{\nu(R)}|R^{\nu(R)} \}=
  \ln\left\{|b_{\nu(R)}|(RR'')^{\nu(R)}\frac{1}{(R'')^{\nu(R)}}\right\}= \nonumber \\
   =\ln\{|b_{\nu(R)}|(RR'')^{\nu(R)}\}+\ln\left\{\frac{1}{(R'')^{\nu(R)}}\right\}\leq
    \ln \mu((R''R,z^0,F)-\|\nu(R) \| \ln \min \{r_1'',r_2''\}.
  \end{gather*}
  This implies that
  \begin{gather}
    \|\nu(R)\|\leq \frac{1}{\ln \min \{r_1'',r_2''\}}( \ln \mu(R''R,z^0,F)- \ln \mu(R,z^0,F))\leq
    \nonumber \\
    \leq
    \frac{1}{\ln \min \{r_1'',r_2''\}}\left( \ln M(R''R,z^0,F)- \ln((1-r_1')(1-r_2') M(R'R,z^0,F))\right)\leq
    \nonumber \\
    \leq
    \frac{1}{\ln \min \{r_1'',r_2''\}}\left( \ln M(R''R,z^0,F)- \ln M(R'R,z^0,F))\right)-
    \frac{\sum_{j=1}^{2}\ln(1-r'_j)}{\ln \min \{r_1'',r_2''\}}= \nonumber \\
    =\frac{1}{\ln \min \{r_1'',r_2''\}}
    \ln\frac{M(R''R,z^0,F)}{M(R'R,z^0,F)}- \frac{\sum_{j=1}^{2}\ln(1-R_j)}{\ln \min \{r_1'',r_2''\}}.
  \label{th4suf4}
  \end{gather}
 Put $R=\frac{\mathbf{1}}{\mathbf{L}(z^0)}.$ Now let $N(F,z^0,\mathbf{L})$ be a $\mathbf{L}$-index of the function $F$ in joint variables at point $z^0$ i. e. it is the least integer 
  for which inequality \eqref{net0} holds at point $z^0.$ Clearly that
  \begin{equation} \label{eq1ver}
N(F,z^0,\mathbf{L})\leq\nu\left(\frac{1}{\mathbf{L}(z^0)},z^0,F\right)=\nu(R,z^0,F).\end{equation}
  But 
  \begin{equation} \label{eq2ver}
M\left(\frac{R''}{\mathbf{L}(z^0)},z^0,F\right)\leq p_1(R',R'')M\left(\frac{R'}{\mathbf{L}(z^0)},z^0,F\right).
\end{equation}
  Therefore, from \eqref{th4suf4}, \eqref{eq1ver}, \eqref{eq2ver} we obtain that $\forall z^0 \in\mathbb{D}^2$
  $$
  N(F,z^0,\mathbf{L})\leq \frac{-\sum_{j=1}^{2}\ln(1-r'_j)}{\ln \min \{r_1'',r_2''\}}+\frac{\ln p_1(R',R'')}{\ln \min \{r_1'',r_2''\}}.
  $$
  This means that $F$ has bounded $\mathbf{L}$-index in joint variables.
\end{proof}

  \noindent{\bf 5. Theorem of Hayman for analytic in a bidisc function of bounded $\mathbf{L}$-index in joint variables.}
\begin{theorem} \label{petr5}
 Let $\mathbf{L} \in Q^2(\mathbb{D}^2)$. An analytic function $F$ in $\mathbb{D}^2$  has bounded  $\mathbf{L}$-index in joint variables if and only if there exist $p\in \mathbb{Z}_+$ and $c\in \mathbb{R}_+$ such that for each $z=(z_1,z_2) \in \mathbb{D}^2$ the next inequality holds
\begin{gather}\label{th5n1}
  \max \left\{ \frac{|F^{(j_1,j_2)}(z_1,z_2)|}{l_1^{j_1}(z_1,z_2)l_2^{j_2}(z_1,z_2)} \colon j_1+j_2=p+1 \right\} \leq c\max \left\{ \frac{|F^{(k_1,k_2)}(z_1,z_2)|}{l_1^{k_1}(z_1,z_2) l_2^{k_2}(z_1,z_2)} \colon k_1+k_2\leq p \right\}.
\end{gather}
\end{theorem}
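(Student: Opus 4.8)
The plan is to prove the two implications separately, with necessity being immediate and sufficiency reducing to the telescoping machinery already developed for Theorem~\ref{petr1}. For necessity, suppose $F$ has bounded $\mathbf{L}$-index in joint variables with $N=N(F,\mathbf{L},\mathbb{D}^2)$, and set $p=N$. For any $z$ and any $(j_1,j_2)$ with $j_1+j_2=p+1$, inequality \eqref{net0} gives $\frac{1}{j_1!j_2!}\frac{|F^{(j_1,j_2)}(z)|}{l_1^{j_1}(z)l_2^{j_2}(z)}\le \max\{\frac{1}{k_1!k_2!}\frac{|F^{(k_1,k_2)}(z)|}{l_1^{k_1}(z)l_2^{k_2}(z)}\colon k_1+k_2\le N\}$. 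Since $\frac{1}{k_1!k_2!}\le 1$ on the right and $j_1!j_2!\le(p+1)!$ on the left, multiplying through and taking the maximum over $j_1+j_2=p+1$ yields \eqref{th5n1} with $c=(N+1)!$. Thus necessity holds with $p=N$, $c=(N+1)!$.

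For sufficiency, suppose \eqref{th5n1} holds with some $p$ and $c$. The plan is to reprove the local estimate \eqref{net1} with $n_0=p$ and then quote the sufficiency half of Theorem~\ref{petr1}. Fix $R\in\mathcal{B}^2$ and, exactly as in the necessity proof of Theorem~\ref{petr1}, I would introduce the quantities $S_m(z^0,R)$ and $S^{*}_m(z^0,R)$, but now with the restriction $k_1+k_2\le p$ in place of $k_1+k_2\le N$, taken over the nested closed polydiscs $\mathbb{D}^2[z^0,\frac{mR}{q\mathbf{L}(z^0)}]$, $m\in\{0,\dots,q\}$ (here I write $m$ for the subdivision index, since $p$ is now reserved for the Hayman order). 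Following that proof, I would locate the extremal pair $(K^{(m)},z^{(m)})$ on the skeleton $\mathbb{T}^2(z^0,\frac{mR}{q\mathbf{L}(z^0)})$, shift it inward to $\tilde z^{(m)}$ by the factor $\frac{m-1}{m}$, and apply the fundamental theorem of calculus to $F^{(K^{(m)})}$; the derivatives produced this way have order $\|K^{(m)}\|+1\le p+1$.

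The decisive new point, and the main obstacle, is the treatment of the top order: when $\|K^{(m)}\|=p$ the arising derivatives $F^{(K^{(m)}+(1,0))}$ and $F^{(K^{(m)}+(0,1))}$ have order exactly $p+1$ and are no longer among the terms defining $S^{*}_m$. Here I would invoke the hypothesis \eqref{th5n1}, which bounds every order-$(p+1)$ term by $c$ times the maximum of the order-$\le p$ terms, hence by a constant multiple of $S^{*}_m(z^0,R)$ after the usual passage between $l_j(z)$ and $l_j(z^0)$ via \eqref{lam1}--\eqref{lam2}; when $\|K^{(m)}\|<p$ the derivatives have order $\le p$ and are controlled by $S^{*}_m$ directly, as before. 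Choosing $q=q(R)$ large enough, now proportional also to $c$, makes the per-step increment at most $\frac{1}{2}S^{*}_m$, so that $S^{*}_m\le 2S^{*}_{m-1}$. Telescoping over $m=1,\dots,q$ and converting $S^{*}$ back to $S$ through the factors $\lambda_{1,j}(R),\lambda_{2,j}(R)$ gives precisely \eqref{net1} with $n_0=p$, the single index $k^0$ being the maximizer of $\frac{1}{k_1!k_2!}\frac{|F^{(k_1,k_2)}(z^0)|}{l_1^{k_1}(z^0)l_2^{k_2}(z^0)}$ over $k_1+k_2\le p$. The sufficiency direction of Theorem~\ref{petr1} then shows that $F$ has bounded $\mathbf{L}$-index in joint variables. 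The only genuine work is checking that the constant $c$ slots into the per-step estimate and that the accompanying factorial bookkeeping (an extra factor no larger than $c\,p!$) stays uniform in $z^0$; every remaining step is a transcription of the argument for Theorem~\ref{petr1}.
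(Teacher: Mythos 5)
Your necessity argument coincides with the paper's (both take $p=N$ and get the constant by crude factorial bounds; your $c=(N+1)!$ is fine, the paper uses the even cruder $((N+1)!)^2$), but your sufficiency proof follows a genuinely different route. The paper does \emph{not} go back through Theorem~\ref{petr1}: instead it proves, from \eqref{th5n1}, the two-radii maximum-modulus estimate
\[
\max\Bigl\{|F(z)|\colon z\in \mathbb{T}^2\Bigl(z^0,\tfrac{\boldsymbol{\beta}}{\mathbf{L}(z^0)}\Bigr)\Bigr\}
\le e^{BS}(p!)^2(2\beta)^{2p}\max\Bigl\{|F(z)|\colon z\in \mathbb{T}^2\Bigl(z^0,\tfrac{\mathbf{1}}{2\beta\mathbf{L}(z^0)}\Bigr)\Bigr\},
\]
and then invokes Theorem~\ref{petr4}. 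To get this, the paper bounds all derivatives of order $\le p+1$ by $B\cdot G(z)$ with $G(z)=\max\{|F^{(k_1,k_2)}(z)|/(l_1^{k_1}(z^0)l_2^{k_2}(z^0))\colon k_1+k_2\le p\}$, joins a point of the small skeleton to the maximum point of the large skeleton by a piecewise analytic curve on a complex line chosen to avoid the zeros of $G$, derives the differential inequality $\frac{d}{dt}G(z(t))\le\bigl(\sum_i l_i(z^0)|z_i'(t)|\bigr)BG(z(t))$, and integrates it — the classical Hayman-type logarithmic estimate along a path. Your plan instead re-runs the subdivision/telescoping argument from the necessity half of Theorem~\ref{petr1} with $p$ in place of $N$, using \eqref{th5n1} precisely where that argument needs control of derivatives of order one above the top order (the only place boundedness of index was used there), and this works: orders $\le p$ are absorbed into $S^*_m$ directly, the single order-$(p+1)$ terms cost an extra uniform factor $c\,p!\prod_j(\lambda_{2,j}(R))^{p+1}(\lambda_{1,j}(R))^{-p}$, and inflating $q(R)$ accordingly keeps the per-step increment below $\tfrac12 S^*_m$, yielding \eqref{net1} with $n_0=p$ and a $p_0$ uniform in $z^0$, after which the sufficiency half of Theorem~\ref{petr1} finishes. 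What each approach buys: yours is more self-contained and avoids the delicate geometric constructions of the paper (the choice of the curve avoiding zeros of $G$, its length estimates, and the a.e.\ differentiability of $|F^{(j)}(z(t))|$), at the price of repeating the telescoping computation with new bookkeeping; the paper's route reuses Theorem~\ref{petr4} as a black box, ties the Hayman criterion into the web of equivalences already established (local estimate, maximum modulus, Hayman condition), and produces the max-modulus inequality as a useful intermediate statement, but rests on the more technical path-integration argument.
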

\begin{proof}
  Let $N=N(F,\mathbf{L},\mathbb{D}^2)<+\infty.$
  The proof of the necessity implies from the definition of the boundedness of $\mathbf{L}$-index in joint variables with $p=N$ and $c=((N+1)!)^2.$
  We prove the sufficiency.
  Let \eqref{th5n1} holds, $z^0 \in \mathbb{D}^2$, $z \in \mathbb{T}^2\left(z^0, \frac{\boldsymbol{\beta}}{\mathbf{L}(z^0)}\right)$.
  For all $j=(j_1,j_2)\in \mathbb{Z}_+^2$, $j_1+j_2\leq p+1$ we have
  \begin{gather*}
    \frac{|F^{(j_1,j_2)}(z)|}{l_1^{j_1}(z_1^0,z_2^0)l_2^{j_2}(z_1^0,z_2^0)}\leq \frac{|F^{(j_1,j_2)}(z)|l_1^{j_1}(z_1,z_2)l_2^{j_2}(z_1,z_2)}{l_1^{j_1}(z_1^0,z_2^0)l_2^{j_2}(z_1^0,z_2^0)l_1^{j_1}(z_1,z_2)l_2^{j_2}(z_1,z_2)}\leq
    \lambda_{2,1}^{j_1}(\beta)\lambda_{2,2}^{j_2}(\beta) \times \\ \times 
    \frac{|F^{(j_1,j_2)}(z)|}{l_1^{j_1}(z_1,z_2)l_2^{j_2}(z_1,z_2)} 
      \leq     \lambda_{2,1}^{j_1}(\beta)\lambda_{2,2}^{j_2}(\beta)
    c\max \left\{ \frac{|F^{(k_1,k_2)}(z_1,z_2)|}{l_1^{k_1}(z_1,z_2)l_2^{k_2}(z_1,z_2)} \colon k_1+k_2\leq p \right\}= \nonumber \\
    =\lambda_{2,1}^{j_1}(\beta)\lambda_{2,2}^{j_2}(\beta)
    c\max \left\{ \frac{l_1^{k_1}(z_1^0,z_2^0)l_2^{k_2}(z_1^0,z_2^0)|F^{(k_1,k_2)}(z_1,z_2)|}{l_1^{k_1}(z_1^0,z_2^0)l_2^{k_2}(z_1^0,z_2^0)l_1^{k_1}(z_1,z_2)l_2^{k_2}(z_1,z_2)} \colon k_1+k_2 \leq p \right\}\leq \nonumber \\
    \leq  \lambda_{2,1}^{j_1}(\beta)\lambda_{2,2}^{j_2}(\beta)
    c\max \left\{\frac{1}{\lambda_{2,1}^{k_1}(\beta)\lambda_{2,2}^{k_2}(\beta)} \frac{|F^{(k_1,k_2)}(z_1,z_2)|}{l_1^{k_1}(z_1^0,z_2^0)l_2^{k_2}(z_1^0,z_2^0)} \colon k_1+k_2\leq p \right\}\leq
    \nonumber \\ \leq
   \max \{ \lambda_{2,1}^{j_1}(\beta)\lambda_{2,2}^{j_2}(\beta)\colon j_1+j_2\leq p+1\}
    c\max \left\{\frac{1}{\lambda_{2,1}^{k_1}(\beta)\lambda_{2,2}^{k_2}(\beta)} \colon k_1+k_2\leq p \right\}\times
    \nonumber \\
    \times
    \max \left\{\frac{|F^{(k_1,k_2)}(z_1,z_2)|}{l_1^{k_1}(z_1^0,z_2^0)l_2^{k_2}(z_1^0,z_2^0)} \colon k_1+k_2\leq p \right\}=B\cdot G(z),
  \end{gather*}
  where
  \begin{gather*}
      B=c \max \{ \lambda_{2,1}^{j_1}(\beta)\lambda_{2,2}^{j_2}(\beta)\colon j_1+j_2\leq p+1\}
    \max \left\{\frac{1}{\lambda_{2,1}^{k_1}(\beta)\lambda_{2,2}^{k_2}(\beta)}  \colon k_1+k_2\leq p \right\}, \\
    G(z)= \max \left\{\frac{|F^{(k_1,k_2)}(z_1,z_2)|}{l_1^{k_1}(z_1^0,z_2^0)l_2^{k_2}(z_1^0,z_2^0)} \colon k_1+k_2\leq p \right\}.
  \end{gather*}
  We choose $z^{(1)}=(z^{(1)}_1,z^{(1)}_2)\in \mathbb{T}^2\left(z^0,\frac{\mathbf{1}}{2\beta\mathbf{L}(z^0)} \right)$ arbitrarily
  and $z^{(2)}\!=\!(z^{(2)}_1,z^{(2)}_2)\!\in\! \mathbb{T}^2\left(z^0,\frac{\boldsymbol{\beta}}{\mathbf{L}(z^0)} \right)$ such that
 \begin{equation} \label{choicez2}
 |F(z^{(2)})|=\max \left\{|F(z)|\colon z \in \mathbb{T}^2\left(z^0,\frac{\boldsymbol{\beta}}{\mathbf{L}(z^0)} \right) \right\}.
\end{equation}
  We connect the points $z^{(1)}$ and $z^{(2)}$  with plane 
  $$ \alpha\colon \  \ z_2=k_2z_1+c_2
  $$
  $$ \frac{z_2-z_2^{(1)}}{z_2^{(2)}-z_2^{(1)}}=\frac{z_1-z_1^{(1)}}{z_1^{(2)}-z_1^{(1)}}, \ \
  k_2=\frac{z_2^{(2)}-z_2^{(1)}}{z_1^{(2)}-z_1^{(1)}}, \ \
  c_2=\frac{z_2^{(1)}z_1^{(2)}-z_1^{(1)}z_2^{(2)}}{z_1^{(2)}-z_1^{(1)}}.
  $$
  Let $\tilde{G(z_1)}=G(z)|_{\alpha} $ be a restriction of the function $G$ onto $\alpha$.
  All functions $F^{(k_1,k_2)}|_{\alpha}$ are analytic functions of $z_1$ in a unit disc and $\tilde{G}(z_1^{(1)})=G(z^{(1)})\neq 0$, otherwise all derivatives of $F$ at the point $z^{(1)}$ equal $0$ and $F\equiv 0$. That's why zeros of the function $\tilde{G}(z^{(1)})$ are isolated as zeros of a function of one variable. Therefore we can choose on $\alpha$ piecewise analytic curve
   $$ \gamma=\{z=(z_1(t),k_2z_1(t)+c_2),0\leq t\leq T \},
   $$
   which joins $z^{(1)}$ and $z^{(2)}$ so that $G(z(t))\neq0$ and its length does not exceed $\int_{0}^{T}|z_1^{'}(t)|dt\leq \frac{2\beta^2+1}{2\beta l_1(z^0)}$.
   Then
   \begin{gather*}
   \int_{0}^{T}|z_2^{'}(t)|dt=|k_2|\int_{0}^{T}|z_1^{'}(t)|dt\leq
   \left|\frac{z_2^{(2)}-z_2^{(1)}}{z_1^{(2)}-z_1^{(1)}} \right|
   \frac{2\beta^2+1}{2\beta l_1(z^0)}\leq
   \frac{2\beta^2+1}{2\beta l_2(z^0)}\frac{2\beta l_1(z^0)}{2\beta^2-1}
   \frac{2\beta^2+1}{2\beta l_1(z^0)}= \\
   =\frac{(2\beta^2+1)^2}{l_2(z^0)(2\beta^2-1)2\beta}.
   \end{gather*}
   Hence,
   \begin{gather}
     \int_{0}^{T}\sum_{i=1}^{2}l_i(z)|z_i^{'}(t)|dt\leq \lambda_{2,2}(\beta)l_2(z^0)\frac{(2\beta^2+1)^2}{l_2(z^0)(2\beta^2-1)2\beta}+
     \lambda_{2,1}(\beta)l_1(z^0)\frac{2\beta^2+1}{l_1(z^0)2\beta}= \nonumber 
     \\
     =\lambda_{2,2}(\beta)\frac{(2\beta^2+1)^2}{(2\beta^2-1)2\beta}+
     \lambda_{2,1}(\beta)\frac{2\beta^2+1}{2\beta}. \label{upperest}
   \end{gather}
   The upper estimate in \eqref{upperest} we denote by $S.$
   Without loss of generality we can assume that the function $z=z(t)$ is analytic on $[0,T]$.
   Then 
   for arbitrary $k\in\mathbb{Z}_+^2,  j\in\mathbb{Z}_+^2,  \|k \|\leq p, \|j \|\leq p, k\neq j$ either
   $$\frac{|F^{(k_1,k_2)}(z_1(t),z_2(t))|}{l_1^{k_1}(z_1^0,z_2^0)l_2^{k_2}(z_1^0,z_2^0)}\equiv \frac{|F^{(j_1,j_2)}(z_1(t),z_2(t))|}{l_1^{j_1}(z_1^0,z_2^0)l_2^{j_2}(z_1^0,z_2^0)}
   $$
   or the equality
   $$\frac{|F^{(k_1,k_2)}(z_1(t),z_2(t))|}{l_1^{k_1}(z_1^0,z_2^0)l_2^{k_2}(z_1^0,z_2^0)}=
   \frac{|F^{(j_1,j_2)}(z_1(t),z_2(t))|}{l_1^{j_1}(z_1^0,z_2^0)l_2^{j_2}(z_1^0,z_2^0)}
   $$
   holds only for a finite set of points $t_k\in[0;T]$.
   Hence we can partition the segment $[0;T]$ onto a finite number of segments such that on each of them the equality
   $$G(z(t))=\frac{|F^{(j_1,j_2)}(z_1(t),z_2(t))|}{l_1^{j_1}(z_1^0,z_2^0)l_2^{j_2}(z_1^0,z_2^0)}
   $$
   holds with some $j_1+j_2\leq p$.
The function  $G(z(t))$ is a continuously differentiable function with the exception, perhaps, of a finite set of points. Using the inequality $\frac{d}{dx} \left|\varphi(x)\right| \leq \left|\frac{d}{dx}\varphi(x)\right|,$ which holds for complex-valued functions of real argument outside a countable set of points,  in view of \eqref{net1},  we have
   \begin{gather*}
     \frac{d}{dt}G(z(t))\leq \max \left\{\frac{1}{l_1^{j_1}(z_1^0,z_2^0)l_2^{j_2}(z_1^0,z_2^0)}
     \left|\frac{d}{dt}F^{(j_1,j_2)}(z_1(t),z_2(t))\right| \colon j_1+j_2\leq p \right\}\leq
     \\ \leq
   \max \left\{\frac{|F^{(j_1+1,j_2)}(z_1(t),z_2(t))|\cdot |z_1^{'}(t)|}{l_1^{j_1}(z_1^0,z_2^0)l_2^{j_2}(z_1^0,z_2^0)}
     +\frac{|F^{(j_1,j_2+1)}(z_1(t),z_2(t))|\cdot  |z_2^{'}(t)|}{l_1^{j_1}(z_1^0,z_2^0)l_2^{j_2}(z_1^0,z_2^0)}
      \colon j_1+j_2\leq p \right\}=
      \\ = \max \left\{|F^{(j_1+1,j_2)}(z(t))|\frac{|z_1^{'}(t)|l_1(z^0)}{l_1^{j_1+1}(z^0)l_2^{j_2}(z^0)}
     +|F^{(j_1,j_2+1)}(z(t))|\frac{|z_2^{'}(t)|l_2(z^0)}{l_1^{j_1}(z^0)l_2^{j_2+1}(z^0)}
      \colon j_1+j_2 \leq p \right\}\leq
      \\ \leq
      (|z_1^{'}(t)|l_1(z^0)+|z_2^{'}(t)|l_2(z^0))\max\left\{
      \frac{|F^{(j_1,j_2)}(z_1(t),z_2(t))|}{l_1^{j_1}(z_1^0,z_2^0)l_2^{j_2}(z_1^0,z_2^0)}\colon j_1+j_2\leq p+1 \right\} \leq
      \\ \leq \left(\sum_{i=1}^{2}l_i(z^0)|z_i^{'}(t)|\right) BG(z(t)).
         \end{gather*}
         Therefore, \eqref{upperest} yields 
         \begin{gather*}
           \left|\ln\frac{G(z^{(2)})}{G(z^{(1)})} \right|=\left| \int_{0}^{T}\frac{1}{G(z(t))}\frac{d}{dt}G(z(t)) \right| \leq
           B\int_{0}^{T}\sum_{i=1}^{2}l_i(z^0)|z_i^{'}(t)|dt\leq B\cdot S.
         \end{gather*}
  Using \eqref{choicez2} we deduce 
   \begin{gather*}
     \max \left\{|F(z)|\colon z\in \mathbb{T}^2\left(z^0, \frac{\boldsymbol{\beta}}{\mathbf{L}(z^0)}\right) \right\}=|F(z^{(2)})|\leq G(z^{(2)})\leq G(z^{(1)})\cdot\exp{BS}.
   \end{gather*}
   Since $z^{(1)}\in \mathbb{T}^2(z^0, \frac{\mathbf{1}}{2\beta\mathbf{L}(z^0)})$ then for all $j\in\mathbb{Z}_+^2$ the Cauchy inequality holds:
   \begin{gather*}
        \frac{|F^{(j)}(z^{(1)})|}{l_1^{j_1}(z_1^0,z_2^0)l_2^{j_2}(z_1^0,z_2^0)}\leq
        j_1!j_2!(2\beta)^{j_1+j_2}M\left(\frac{\mathbf{1}}{2\beta\mathbf{L}(z^0)},z^0,F\right).
   \end{gather*}
   Therefore, $G(z^{(1)})\leq(p!)^2(2\beta)^{2p}M(\frac{1}{2\beta\mathbf{L}(z^0)},z^0,F)$ and 
   \begin{gather*}
   \max \left\{|F(z)|\colon z\in \mathbb{T}^2\left(z^0, \frac{\boldsymbol{\beta}}{\mathbf{L}(z^0)}\right) \right\}\!\leq\! e^{BS}(p!)^2(2\beta)^{2p}\max \left\{|F(z)|\colon z\in \mathbb{T}^2\left(z^0, \frac{\mathbf{1}}{2\beta\mathbf{L(z^0)}}\right) \right\}.
   \end{gather*}
   Hence, by Theorem \ref{petr4} $F$ is a function of bounded $\mathbf{L}$-index in joint variables.
   \end{proof}
\begin{theorem}
 Let $\beta>1$, $\mathbf{L} \in Q^2(\mathbb{D}^2)$. An analytic function $F$ in $\mathbb{D}^2$  has bounded  $\mathbf{L}$-index in joint variables if and only if there exist $c\in (0;+\infty)$ and $N\in \mathbb{N}$ such that for each $z \in \mathbb{D}^2$ the next inequality holds
\begin{gather}\label{th6main}
  \sum_{k_1+k_2=0}^{N}\frac{F^{(k_1,k_2)}(z_1,z_2)}{k_1!k_2!l_1^{k_1}(z_1,z_2)
  l_2^{k_2}(z_1,z_2)}\geq c\sum_{k_1+k_2=N+1}^{\infty}\frac{F^{(k_1,k_2)}(z_1,z_2)}{k_1!k_2!l_1^{k_1}(z_1,z_2)
  l_2^{k_2}(z_1,z_2)}.
\end{gather}
\end{theorem}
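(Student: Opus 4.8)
The plan is to read the two series in \eqref{th6main} as series of moduli (the only meaningful reading of the inequality, and the standard Hayman form), and to abbreviate
$$a_{k_1,k_2}(z)=\frac{|F^{(k_1,k_2)}(z_1,z_2)|}{k_1!k_2!\,l_1^{k_1}(z_1,z_2)l_2^{k_2}(z_1,z_2)},\qquad \Sigma_N(z)=\sum_{k_1+k_2\le N}a_{k_1,k_2}(z),\qquad \mathcal{T}_N(z)=\sum_{k_1+k_2> N}a_{k_1,k_2}(z),$$
so that \eqref{th6main} reads $\Sigma_N(z)\ge c\,\mathcal{T}_N(z)$ for all $z\in\mathbb{D}^2$. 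I would prove the two implications separately, leaning on Theorems~\ref{petr1} and \ref{petr5}.

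For the necessity, let $N_0=N(F,\mathbf{L},\mathbb{D}^2)<\infty$ and fix $R=(r_1,r_2)$ with $r_j\in(1,\beta]$, which is possible since $\beta>1$. By the necessity part of Theorem~\ref{petr1}, inequality \eqref{net1} holds for this $R$ with $n_0=N_0$ and some $p_0=p_0(R)$, so the Cauchy estimate \eqref{dopner} is available. Taking $(k_1,k_2)=(0,0)$ and $(s_1,s_2)=(m_1,m_2)$ in \eqref{dopner} collapses the factorial factor to $1$ and gives, for every $z^0\in\mathbb{D}^2$ and every $(m_1,m_2)$,
$$a_{m_1,m_2}(z^0)\le\frac{(\lambda_{2,1}(R)\lambda_{2,2}(R))^{N_0}p_0}{r_1^{m_1}r_2^{m_2}}\,a_{k_1^0,k_2^0}(z^0)\le\frac{C}{r^{m_1+m_2}}\,\Sigma_{N_0}(z^0),$$
where $C=(\lambda_{2,1}(R)\lambda_{2,2}(R))^{N_0}p_0$, $r=\min\{r_1,r_2\}>1$, and where $\|k^0\|\le N_0$ forces $a_{k_1^0,k_2^0}(z^0)\le\Sigma_{N_0}(z^0)$. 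Summing over the tail and counting $n+1$ indices with $k_1+k_2=n$ yields $\mathcal{T}_{N_0}(z^0)\le C\,\Sigma_{N_0}(z^0)\sum_{n>N_0}(n+1)r^{-n}$; the numerical series converges because $r>1$, so \eqref{th6main} holds with $N=N_0$ and $c$ equal to the reciprocal of $C\sum_{n>N_0}(n+1)r^{-n}$, uniformly in $z^0$.

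For the sufficiency I would reduce \eqref{th6main} to the hypothesis \eqref{th5n1} of Theorem~\ref{petr5}. Since every term of $\mathcal{T}_N$ is nonnegative, $\mathcal{T}_N(z)\ge\max\{a_{j_1,j_2}(z):j_1+j_2=N+1\}$, so \eqref{th6main} gives $\max_{j_1+j_2=N+1}a_{j_1,j_2}(z)\le c^{-1}\Sigma_N(z)$. Using $j_1!j_2!\le(N+1)!$ when $j_1+j_2=N+1$ on the left, and $k_1!k_2!\ge1$ together with $\#\{(k_1,k_2):k_1+k_2\le N\}=\tfrac{(N+1)(N+2)}{2}$ on the right, this becomes
$$\max_{j_1+j_2=N+1}\frac{|F^{(j_1,j_2)}(z)|}{l_1^{j_1}(z)l_2^{j_2}(z)}\le\frac{(N+1)!\,(N+1)(N+2)}{2c}\,\max_{k_1+k_2\le N}\frac{|F^{(k_1,k_2)}(z)|}{l_1^{k_1}(z)l_2^{k_2}(z)},$$
which is exactly \eqref{th5n1} with $p=N$ and an explicit constant $c$. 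Theorem~\ref{petr5} then gives bounded $\mathbf{L}$-index.

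The main obstacle is the necessity direction: boundedness of index by itself only bounds each individual normalised derivative $a_{m_1,m_2}$ by the head $\Sigma_N$, which cannot control an infinite tail. The crux is therefore to upgrade this to genuine \emph{geometric} decay of $a_{m_1,m_2}(z)$ in $\|m\|$, and this is precisely what the Cauchy-integral estimate \eqref{dopner} with radii $r_j>1$ supplies; summability of the tail is then automatic. The sufficiency direction is, by contrast, immediate once Theorem~\ref{petr5} is invoked.
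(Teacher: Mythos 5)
Your proof is correct, and your sufficiency half coincides with the paper's: both reduce \eqref{th6main} to the Hayman-type condition \eqref{th5n1} and invoke Theorem~\ref{petr5} (the paper leaves the factorial bookkeeping implicit; you carry it out, and your lattice count $(N+1)(N+2)/2$ is the accurate one). The necessity half is where you genuinely diverge. The paper obtains geometric decay of the tail terms by rescaling the weight: it takes $\tilde{l}_j=\theta_j l_j$ with $\theta_j\in(1/\beta,1)$, uses Theorem~\ref{petr2} to conclude that $F$ has bounded $\tilde{\mathbf{L}}$-index, and then applies the bare definition of bounded $\tilde{\mathbf{L}}$-index, which produces the factor $\theta_1^{j_1-N}\theta_2^{j_2-N}$ for every $(j_1,j_2)$ and lets the double geometric series be summed variable-by-variable with no lattice-point counting. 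You bypass Theorem~\ref{petr2} altogether: you feed the local uniform bound \eqref{net1} of Theorem~\ref{petr1} into Cauchy's formula on polydiscs of radii $r_j/l_j(z^0)$ with $r_j>1$ (i.e.\ \eqref{dopner} with $k=(0,0)$, where the factorial factor indeed collapses to $1$), getting decay $r^{-\|s\|}$ with $r=\min\{r_1,r_2\}>1$, at the price of the $(n+1)$ count per diagonal and the constants $p_0$ and $(\lambda_{2,1}(R)\lambda_{2,2}(R))^{n_0}$. Both mechanisms exploit precisely the room that $\beta>1$ leaves above $1$ --- the paper through admissible rescalings $\theta_j>1/\beta$, you through admissible radii $r_j>1$ --- so neither is more general; the paper's constants come out cleaner, while your route needs only Theorem~\ref{petr1} (whose necessity part is proved independently) rather than the equivalence Theorem~\ref{petr2}. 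A final point in your favour: you state explicitly that \eqref{th6main} must be read with moduli, which is indeed how the paper's own proof treats it, even though the statement as printed omits the absolute values.
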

\begin{proof}
  Let $\frac{1}{\beta}<\theta_j<1,$ $j\in\{1,2\}.$ If $F$ has bounded $\mathbf{L}$-index in joint variables then by Theorem \ref{petr2} $F$ has bounded $\tilde{\mathbf{L}}$-index in joint variables, where $\tilde{\mathbf{L}}=(\tilde{l}_1(z),\tilde{l}_2(z))$, $\tilde{l}_j(z)=\theta_jl_j(z)$, $j\in\{1,2\}$.
  Therefore,
  \begin{gather*}
    \max\left\{\frac{|F^{(k_1,k_2)}(z_1,z_2)|}{k_1!k_2!l_1^{k_1}(z_1,z_2)
  l_2^{k_2}(z_1,z_2)}\colon 0\leq k_1+k_2\leq N(F,\tilde{\mathbf{L}}, \mathbb{D}^2) \right\}=  \nonumber
  \\ =\max\left\{\frac{\theta_1^{k_1}\theta_2^{k_2}|F^{(k_1,k_2)}(z_1,z_2)|}
  {k_1!k_2!\tilde{l}_1^{k_1}(z_1,z_2)
  \tilde{l}_2^{k_2}(z_1,z_2)}\colon 0\leq k_1+k_2\leq N(F,\tilde{\mathbf{L}}, \mathbb{D}^2) \right\}\geq
  \nonumber \\ \geq
  (\theta_1\theta_2)^{N(F,\tilde{\mathbf{L}}, \mathbb{D}^2)}\max\left\{\frac{|F^{(k_1,k_2)}(z_1,z_2)|}
  {k_1!k_2!\tilde{l}_1^{k_1}(z_1,z_2)
  \tilde{l}_2^{k_2}(z_1,z_2)}\colon 0\leq k_1+k_2\leq N(F,\tilde{\mathbf{L}}, \mathbb{D}^2) \right\}\geq
  \nonumber \\ \geq
    (\theta_1\theta_2)^{N(F,\tilde{\mathbf{L}}, \mathbb{D}^2)}\frac{|F^{(j_1,j_2)}(z_1,z_2)|}
  {j_1!j_2!\tilde{l}_1^{j_1}(z_1,z_2)
  \tilde{l}_2^{j_2}(z_1,z_2)}=
  \nonumber \\
  = \theta_1^{N(F,\tilde{\mathbf{L}}, \mathbb{D}^2)-j_1}\theta_2^{N(F,\tilde{\mathbf{L}}, \mathbb{D}^2)-j_2}\frac{|F^{(j_1,j_2)}(z_1,z_2)|}
  {j_1!j_2!l_1^{j_1}(z_1,z_2)
  l_2^{j_2}(z_1,z_2)}
  \end{gather*}
  for all $j_1\geq 0, j_2\geq 0$ and
 \begin{gather*}
 \sum_{j_1+j_2=N(F,\tilde{\mathbf{L}}, \mathbb{D}^2)+1}^{\infty}\frac{|F^{(j_1,j_2)}(z_1,z_2)|}{j_1!j_2!l_1^{j_1}(z_1,z_2)
 l_2^{j_2}(z_1,z_2)} \leq
 \nonumber \\ \leq
 \max\left\{\frac{|F^{(k_1,k_2)}(z_1,z_2)|}{k_1!k_2!l_1^{k_1}(z_1,z_2)
  l_2^{k_2}(z_1,z_2)}\colon 0\leq k_1+k_2\leq N(F,\tilde{\mathbf{L}}, \mathbb{D}^2) \right\}\times\nonumber \\\times\sum_{j_1+j_2=N(F,\tilde{\mathbf{L}}, \mathbb{D}^2)+1}^{\infty}\theta_1^{j_1-N(F,\tilde{\mathbf{L}}, \mathbb{D}^2)}\theta_2^{j_2-N(F,\tilde{\mathbf{L}}, \mathbb{D}^2)}= \nonumber \\ =\frac{\theta_1\theta_2}{(1-\theta_1)(1-\theta_2)} \max\left\{\frac{|F^{(k_1,k_2)}(z_1,z_2)|}{k_1!k_2!l_1^{k_1}(z_1,z_2)
  l_2^{k_2}(z_1,z_2)}\colon 0\leq k_1+k_2\leq N(F,\tilde{\mathbf{L}}, \mathbb{D}^2) \right\}\leq \nonumber \\
  \leq \frac{\theta_1\theta_2}{(1-\theta_1)(1-\theta_2)}
  \sum_{k_1+k_2=0}^{N(F,\tilde{\mathbf{L}}, \mathbb{D}^2)}\frac{|F^{(k_1,k_2)}(z_1,z_2)|}{k_1!k_2!l_1^{k_1}(z_1,z_2)
  l_2^{k_2}(z_1,z_2)}.
 \end{gather*}
 Hence, we obtain \eqref{th6main} with $N=N(F,\tilde{\mathbf{L}}, \mathbb{D}^2)$ and $c=\frac{\theta_1\theta_2}{(1-\theta_1)(1-\theta_2)}.$
 On the contrary, inequality \eqref{th6main} imply 
 \begin{gather*}
   \max\left\{\frac{|F^{(j_1,j_2)}(z_1,z_2)|}{j_1!j_2!l_1^{j_1}(z_1,z_2)
 l_2^{j_2}(z_1,z_2)}\colon  j_1+j_2=N+1 \right\}\leq \nonumber \\
 \leq \sum_{k_1+k_2=N+1}^{\infty}\frac{F^{(k_1,k_2)}(z_1,z_2)}{k_1!k_2!l_1^{k_1}(z_1,z_2)
  l_2^{k_2}(z_1,z_2)}
  \leq\frac{1}{c}\sum_{k_1+k_2=0}^{N}\frac{|F^{(k_1,k_2)}(z_1,z_2)|}{k_1!k_2!l_1^{k_1}(z_1,z_2)
  l_2^{k_2}(z_1,z_2)}\leq \nonumber \\
  \leq \frac{(N+1)N}{2c}\max \left\{ \frac{|F^{(k_1,k_2)}(z_1,z_2)|}{k_1!k_2!l_1^{k_1}(z_1,z_2)
  l_2^{k_2}(z_1,z_2)}\colon 0\leq k_1+k_2\le N \right\}
  \end{gather*}
  and by Theorem \ref{petr5} $F$ is of bounded $\mathbf{L}$-index in joint variables.
\end{proof}
  \noindent{\bf 6. Some property of power expansion of analytic in a bidisc function of bounded $\mathbf{L}$-index in joint variables.}
Let $(z_1^0,z_2^0)\in \mathbb{D}^2$. We develop an analytic in $\mathbb{D}^2$ function $F(z_1,z_2)$ in the power series written in a diagonal form
\begin{gather}\label{th7main}
  F(z_1,z_2)=\sum_{k_1+k_2=0}^{\infty}p_{k_1+k_2}((z_1-z_1^0),(z_2-z_2^0))=
    \sum_{k=0}^{\infty}\sum_{j_1+j_2=k}b_{j_1,j_2}(z_1-z_1^0)^{j_1}(z_2-z_2^0)^{j_2},
\end{gather}
where $p_{k}$ are homogeneous polynomials of $k$-th power.
The polynomial $p_{k_0}, k_0\in \mathbb{Z}_+,$ is called a main polynomial in the power expansion \eqref{th7main} on $\mathbb{T}^2(z^0,R)$ if for every $z\in \mathbb{T}^2(z^0,R)$ the next inequality holds:
\begin{gather*}
  |\sum_{k_1+k_2\neq k^0}p_{k_1+k_2}((z_1-z_1^0),(z_2-z_2^0))|\leq \frac{1}{2}
  \max\{|b_{j_1,j_2}|r_1^{j_1}r_2^{j_2}\colon j_1+j_2=k^0 \}, \\
   \text{ where } b_{j_1,j_2}=\frac{F^{(j_1,j_2)}(z_1^0,z_2^0)}{j_1!j_2!}. \nonumber
\end{gather*}
\begin{theorem}
 Let $\beta>1$, $\mathbf{L} \in Q^2(\mathbb{D}^2)$. An analytic function $F$ in $\mathbb{D}^2$  has bounded  $\mathbf{L}$-index in joint variables if and only if there exists $p\in \mathbb{Z}_+$  that for all $d\in (0;\beta]$ there exists $\eta(d)\in (0;d)$ such that for each $z^0 \in \mathbb{D}^2$ and some $r=r(d,z^0) \in(\eta (d),d),$ $k^0=k^0(d,z^0)\leq p$ the polynomial $p_{k^0}$ is the main polynomial in the series \eqref{th7main} on $\mathbb{T}^2(z^0,\frac{R}{\mathbf{L}(z^0)})$ with $R=(r,r).$ 
 \end{theorem}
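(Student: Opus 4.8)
The plan is to reduce both implications to the already-established maximum-modulus characterisation (Theorem~\ref{petr4}) by analysing the diagonal power series \eqref{th7main} through its maximal term. For a fixed centre $z^0$ write $\rho_j=r/l_j(z^0)$ and set $a_k(r)=\max\{|b_{j_1,j_2}|\rho_1^{j_1}\rho_2^{j_2}\colon j_1+j_2=k\}=c_k r^k$, where $c_k=\max\{|b_{j_1,j_2}|l_1^{-j_1}(z^0)l_2^{-j_2}(z^0)\colon j_1+j_2=k\}$; thus the degree-by-degree maximal terms behave like the coefficients of a single power series in $r$. I will use repeatedly that, by the relation recalled in the proof of Theorem~\ref{petr4}, the $\mathbf{L}$-index $N(F,z^0,\mathbf{L})$ equals the least degree at which $c_k$ attains its maximum over all $k$.

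For the sufficiency I would first extract the quantitative content of the main-polynomial hypothesis by an orthogonality argument on the skeleton. Integrating $|\sum_{k\neq k^0}p_k|^2$ over $\mathbb{T}^2(z^0,(r,r)/\mathbf{L}(z^0))$ and using that the monomials $(z_1-z_1^0)^{j_1}(z_2-z_2^0)^{j_2}$ are pairwise orthogonal there, the bound $|\sum_{k\neq k^0}p_k|\le\frac12\max\{|b_{j_1,j_2}|\rho_1^{j_1}\rho_2^{j_2}\}$ forces $|b_{j_1,j_2}|\rho_1^{j_1}\rho_2^{j_2}\le\frac12 a_{k^0}(r)$ for every $(j_1,j_2)$ with $j_1+j_2\neq k^0$, i.e. $a_k(r)\le\frac12 a_{k^0}(r)$, equivalently $c_k r^k\le\frac12 c_{k^0}r^{k^0}$, for all $k\neq k^0$. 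If $r>1$ this immediately yields $N(F,z^0,\mathbf{L})\le k^0\le p$: were the least maximising degree $m$ strictly larger than $k^0$, then $c_m\ge c_{k^0}$ would give $r^{m-k^0}\le\frac12<1$, impossible for $r>1$.

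To remove the restriction $r>1$ I would pass to $\tilde{\mathbf{L}}=\mathbf{L}/\eta(\beta)$ when $\eta(\beta)<1$ (and do nothing when $\eta(\beta)\ge1$, since then $r>\eta(\beta)\ge1$). Because the skeleton $\mathbb{T}^2(z^0,(r,r)/\mathbf{L}(z^0))$ coincides with $\mathbb{T}^2(z^0,(r/\eta,r/\eta)/\tilde{\mathbf{L}}(z^0))$, the main-polynomial hypothesis for $(\mathbf{L},r)$ is exactly the main-polynomial hypothesis for $(\tilde{\mathbf{L}},r/\eta)$ with $r/\eta>1$; scaling $\mathbf{L}$ down preserves the defining lower bound, and since $\mathbf{L}\asymp\tilde{\mathbf{L}}$ it preserves membership in $Q^2(\mathbb{D}^2)$. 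The previous paragraph then gives $N(F,z^0,\tilde{\mathbf{L}})\le p$ for every $z^0$, so $F$ has bounded $\tilde{\mathbf{L}}$-index, and Theorem~\ref{petr2} returns bounded $\mathbf{L}$-index.

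For the necessity, assume $N=N(F,\mathbf{L},\mathbb{D}^2)<\infty$. Theorem~\ref{petr4} supplies uniform maximum-modulus ratios, and feeding these into the estimate \eqref{th4suf4} bounds the central index $\|\nu(\sigma/\mathbf{L}(z^0))\|$ by a constant $K$, uniformly in $z^0$, on the relevant range of scales $\sigma\le\beta$; this $K$ serves as $p$. The task is then, for a prescribed $d$, to exhibit $r\in(\eta(d),d)$ and $k^0\le K$ with $\sum_{k\neq k^0}(k+1)c_k r^k\le\frac12 c_{k^0}r^{k^0}$, which by the triangle inequality makes $p_{k^0}$ the main polynomial. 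I would obtain this by first deriving from Theorem~\ref{petr4} and Cauchy's inequality a uniform geometric decay $c_k r^k\le\theta^{|k-k^0|}c_{k^0}r^{k^0}$ (with $\theta<1$) away from the dominant degree $k^0=\|\nu(r/\mathbf{L})\|$, and then choosing $r$ in the interior of a constancy interval of the nondecreasing, $K$-bounded step function $\sigma\mapsto\|\nu(\sigma/\mathbf{L})\|$ so that $k^0$ strictly dominates its neighbours; $\eta(d)$ is read off from $\theta$ and the length of such an interval, and the $(k+1)$-weighted geometric tail sums below $\frac12 c_{k^0}r^{k^0}$. The main obstacle is precisely this step: bounded index gives painlessly that the maximal term sits at a bounded degree, but the main-polynomial condition demands the far stronger uniform domination $|\sum_{k\neq k^0}p_k|\le\frac12 a_{k^0}$ over the entire skeleton, and securing the decay constants together with an admissible radius $r$ both independent of $z^0$ (while avoiding the jump points of the central index and staying inside $(\eta(d),d)$) is the technical heart of the argument.
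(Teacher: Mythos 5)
Your sufficiency argument is correct, and it takes a genuinely different route from the paper's. The paper derives the same coefficient inequality $a_kr^k\le\frac12 a_{k^0}r^{k^0}$ (via Cauchy's inequality rather than your Parseval/orthogonality argument --- both work), but then argues by contradiction through the Hayman-type Theorem \ref{petr5}: if the index were unbounded there would be a point where some derivative of order $p+1$ dominates all lower-order ones by the factor $\left(\frac{(p+1)!}{(\eta(1))^{p+1}}\right)^2$, contradicting that coefficient bound. You instead identify the pointwise index $N(F,z^0,\mathbf{L})$ with the least degree maximizing $c_k$, note that the coefficient bound forces this degree to be $\le k^0$ whenever $r>1$, and remove the restriction $r>1$ by rescaling to $\tilde{\mathbf{L}}=\mathbf{L}/\eta(\beta)$ and invoking Theorem \ref{petr2}. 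This is clean and sound: the identification of the pointwise index with the least maximizing degree is easily verified (the paper records only the one-sided \eqref{eq1ver}), the rescaling preserves all hypotheses since $\eta(\beta)<1$ only increases the $l_j$, and the main-polynomial condition depends only on the geometric skeleton, not on how its radii are parametrized. Your proof avoids Theorem \ref{petr5} entirely and yields the explicit bound $N(F,\tilde{\mathbf{L}},\mathbb{D}^2)\le p$.

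The necessity, however, has a genuine gap, and you name it yourself: everything hinges on producing, for each $z^0$, a radius $r\in(\eta(d),d)$ --- with $\eta(d)$ \emph{independent} of $z^0$ --- at which the dominant degree $k^0$ beats every other degree by enough to absorb the $(k+1)$-weighted tail, and this step is only asserted. The decay $c_kr^k\le\theta^{|k-k^0|}c_{k^0}r^{k^0}$ does not follow from Theorem \ref{petr4} plus Cauchy's inequality at a generic radius: for degrees $k\le N$ two maximal terms can tie or nearly tie, and then no $\theta<1$ exists at that radius. Arranging a radius where this cannot happen, inside a window of uniform relative size, is the entire content of the paper's proof: it works on the geometric grid $r_m=\frac{d}{(d+1)c^m}$ with $c=2((N+1)^3+6(N+3)!)$, tracks the leading degree $s_m$ and the second maximum $\mu_m^*$, and shows by a descent/pigeonhole argument that some $m_0\le 2N+1$ satisfies $\mu_{m_0}^*\le\mu_{m_0}/c$ (otherwise $s_m$ would drop by one every two steps and become negative); this produces $\eta(d)=\frac{d}{(d+1)c^{2(N+1)}}$, and the tail estimates \eqref{th7_132}--\eqref{th7_133} then close the argument. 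Your ``constancy interval of $\sigma\mapsto\|\nu(\sigma)\|$'' idea can in fact be completed along the same lines (a nonincreasing integer sequence bounded by $K$, sampled at $K+2$ geometric scales, must repeat at two consecutive scales, and constancy across a multiplicative gap $c$ gives two-sided decay with ratio $c^{-1/2}$ at the geometric mean), but as written the proposal replaces the theorem's technical core by a statement of intent, so the necessity direction remains unproven.
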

\begin{proof}
  Let $F$ be of bounded $\mathbf{L}$-index in joint variables with $N=N(F,\mathbf{L},\mathbb{D}^2)<+\infty$ and 
  $n_0$ be $\mathbf{L}$-index in joint variables at a point $z^0\in\mathbb{D}^2.$ Then for each $z^0 \in \mathbb{D}^2$ \  $n_0\leq N$.
  We put
  \begin{gather*}
    a_{j_1,j_2}^{*}=\frac{|b_{j_1,j_2}|}{l_1^{j_1}(z^0)l_2^{j_2}(z^0)}=
    \frac{|F^{(j_1,j_2)}(z_1^0,z_2^0)|}{j_1!j_2!l_1^{j_1}(z^0)l_2^{j_2}(z^0)},
    \\
    a_k=\max\{a_{j_1,j_2}^{*} \colon j_1+j_2=k \}, \\
c=2((N+1)^3+6(N+3)!). \nonumber 
  \end{gather*}
  Let $d\in (0;\beta]$ be an arbitrary number. We put $r_m=\frac{d}{(d+1)c^m}, m\in\mathbb{Z}_+$ and denote  $$\mu_m=\max\{a_kr_m^k \colon k\in \mathbb{Z}_+\},\ s_m=\min\{k \colon a_kr_m^k=\mu_m \}.$$ Since $z^0$ is a fixed point the inequality $a_{k_1,k_2}^{*}\leq \max\{a_{j_1,j_2}^{*} \colon j_1+j_2\leq n_0 \}$ is valid for all $(k_2,k_2) \in \mathbb{Z}_+^2.$ Then $a_k\leq a_{n_0}$ for all $k \in \mathbb{Z}_+$.
  Hence, for all $k>n_0$ in view of $r_0<1$ we have
  \begin{gather*}
    a_kr_0^k<a_{n_0}r_0^{n_0}.
  \end{gather*}
  This implies $s_0\leq n_0$.
  Since $cr_m=r_{m-1}$, we obtain that for each $k>s_{m-1}$ 
  \begin{gather}
    a_{s_{m-1}}r_m^{s_{m-1}}=a_{s_{m-1}}r_{m-1}^{s_{m-1}}c^{-s_{m-1}}\geq
    a_kr_{m-1}^kc^{-s_{m-1}}=a_kr_{m}^kc^{k-s_{m-1}}\geq ca_kr_m^k.
  \label{th7_127}
  \end{gather}
  From \eqref{th7_127} it follows that $s_m\leq s_{m-1}$ for all $m \in \mathbb{N}$. Thus, we can rewrite 
  $$
  \mu_0=\max\{a_kr_0^k \colon k\leq n_0\},\ \mu_m=\max\{a_kr_m^k \colon k\leq s_{m-1}\}.
  $$
  We denote
  \begin{gather*}
    \mu_0^{*}=\max\{a_kr_0^k \colon s_0\neq k\leq n_0\}, \\
    \mu_m^{*}=\max\{a_kr_m^k \colon s_m\neq k\leq s_{m-1}\}, \\
    s_0^{*}=\min\{k \colon k\neq s_0, a_kr_0^k=\mu_0^{*} \}, \\
    s_m^{*}=\min\{k \colon k\neq s_m, a_kr_m^k=\mu_m^{*} \}, m \in \mathbb{N}
  \end{gather*}
  and we will show that there exists $m_0 \in\mathbb{Z}_+$ such, that
  \begin{gather}\label{th7_128}
   \frac{\mu_{m_0}^{*}}{\mu_{m_0}} \leq \frac{1}{c}.
  \end{gather}
 Suppose that for all $m\in \mathbb{Z}_+$ the next inequality holds
  \begin{gather}\label{th7_129}
    \frac{\mu_m^{*}}{\mu_m} > \frac{1}{c}.
  \end{gather}
  If $s_m^{*}<s_m$ $(s_m^{*}\neq s_m$ in view of definition) then we have 
  \begin{gather*}
    a_{s_m^{*}}r_{m+1}^{s_m^{*}}=\frac{a_{s_m^{*}}r_{m}^{s_m^{*}}}{c^{s_m^{*}}}=
    \frac{\mu_m^{*}}{c^{s_m^{*}}}>\frac{\mu_m}{c^{s_m^{*}+1}}=
    \frac{a_{s_m}r_{m}^{s_m}}{c^{s_m^{*}+1}}=
    \frac{a_{s_m}r_{m+1}^{s_m}}{c^{s_m^{*}+1-s_m}}\geq a_{s_m}r_{m+1}^{s_m},
      \end{gather*}
      and for all $k>s_m^{*}, k\neq s_m$, similarly, 
      \begin{gather*}
  a_{s_m^{*}}r_{m+1}^{s_m^{*}}=\frac{a_{s_m^{*}}r_{m}^{s_m^{*}}}{c^{s_m^{*}}}\geq
      \frac{a_{k}r_{m}^{k}}{c^{s_m^{*}}}\geq \frac{a_{k}r_{m}^{k}}{c^{k-1}}=
      \frac{ca_{k}r_{m}^{k}}{c^{k}}=ca_{k}r_{m+1}^{k},
      \end{gather*}
      i.e. $a_{s_m^{*}}r_{m+1}^{s_m^{*}}>a_{k}r_{m+1}^{k}$ for all $k>s_m^{*}$.
      Hence,
      \begin{gather}\label{th7_130}
        s_{m+1}\leq s_m^{*}\leq s_m-1.
      \end{gather}
      On the contrary, if $s_m<s_m^{*}\leq s_{m-1}$ then the equality $s_{m+1}=s_m$ may hold.
      But in this case the inequalities $s_{m+1}^{*}\leq s_m$ and $s_m^{*}\neq s_{m+1}$ imply that $$s_{m+1}^{*}< s_{m+1} \ \ (s_{m+1}^{*}\neq s_{m+1}).$$
      Instead of \eqref{th7_130} we have the inequality
      $$ s_{m+2}\leq s_{m+1}^{*} \leq s_{m+1}-1=s_m-1.
      $$
      Hence, if for all $m\in \mathbb{Z}_+$ estimate \eqref{th7_129} is true then for all $m\in \mathbb{Z}_+$ either inequality
      $s_{m+1}\leq s_m-1$ or $s_{m+2}\leq s_m-1$ holds, i.e. $s_{m+2}\leq s_m-1,$ because $s_{m+2}\leq s_{m+1}.$ 
      It implies that
      \begin{gather*}
        s_m\leq s_{m-2}-1\leq \ldots \leq s_{m-2\lfloor\frac{m}{2}\rfloor}-\left\lfloor\frac{m}{2}\right\rfloor\leq s_0-\left\lfloor\frac{m}{2}\right\rfloor\leq
        n_0-\left\lfloor\frac{m}{2}\right\rfloor\leq N-\left\lfloor\frac{m}{2}\right\rfloor,
      \end{gather*}
      i.e. $s_m<0$ if only $m>2N+1$, which is impossible.
      Therefore, there exists $m_0\leq 2N+1$ such that \eqref{th7_128} holds. 
      We put 
      $$
        r=r_{m_0}, \ \eta (d)=\frac{d}{(d+1)c^{2(N+1)}}, \ p=N \ \text{ and } \ k_0=s_{m_0}.
      $$
      Then for all $j_1+j_2\neq k_0=s_{m_0}$ on $\mathbb{T}^2(z^0, \frac{r}{\mathbf{L}(z^0)})$ in view \eqref{th7_128} we have
      \begin{gather*}
        |b_{j_1,j_2}||z_1-z_1^0|^{j_1}|z_2-z_2^0|^{j_2}=a_{j_1,j_2}^{*}r^{j_1+j_2} \leq a_{j_1+j_2}r^{j_1+j_2} 
        \leq \mu^*_{m_0} \leq \frac{1}{c} \mu_{m_0} \le \\ 
        \leq \frac{1}{c}a_{s_{m_0}}r_{m_0}^{s_{m_0}}=
        \frac{1}{c}a_{k_0}r^{k_0}.
      \end{gather*}
    Thus, on $\mathbb{T}^2(z^0, \frac{r}{\mathbf{L}(z^0)})$ we obtain 
      \begin{gather}
      \left|\sum_{j_1+j_2\neq k_0}b_{j_1,j_2}(z_1-z_1^0)^{j_1}(z_2-z_2^0)^{j_2}\right| \leq
      \sum_{j_1+j_2\neq k_0}a_{j_1,j_2}^{*}r^{j_1+j_2}\leq
      \sum_{k=0, k\neq k_0}^{\infty}a_k(k+1)^2r^k=\nonumber \\
      =\sum_{k=0, \ k\neq s_{m_0}}^{s_{m_0-1}}a_k(k+1)^2r^k+\sum_{k=s_{m_0-1}+1}^{\infty}a_k(k+1)^2r^k.
        \label{th7_131}
      \end{gather}
      We will estimate two sums in \eqref{th7_131}. 
     From \eqref{th7_128} it follows that  $ \mu^{*}_{m_0}\leq\frac{1}{c}\mu_{m_0}$ or 
     $ \max\{a_kr^k_{m_0}\colon k\neq s_{m_0}, \ k\le s_{m_0-1}\}\leq \frac{1}{c}\max\{a_kr_{m_0}^k: k\neq s_{m_0}, k\leq s_{m_0-1} \},$ i. e.  $a_kr^k\leq \frac{1}{c}a_{k_0}r^{k_0}.$ Then 
  \begin{gather}
        \sum_{k=0, \ k\ne s_{m_0}}^{s_{m_0-1}}a_k(k+1)^2r^k\leq \frac{a_{k_0}r^{k_0}}{c}\sum_{k=0}^{N}(k+1)^2\leq \frac{a_{k_0}r^{k_0}}{c}(N+1)^3.
        \label{th7_132}
      \end{gather}
      For each $k$ the inequality $a_kr_{m_0-1}^k\leq \mu_{m_0-1}$ holds and, hence,
      \begin{equation} \label{equ1}
a_kr_{m_0}^k=\frac{a_kr_{m_0-1}^k}{c^k}\leq \frac{\mu_{m_0-1}}{c^k}.
\end{equation}
Using \eqref{equ1} and  \eqref{th7_128} we deduce 
     \begin{gather}
     \sum_{k=s_{m_0-1}+1}^{\infty}a_k(k+1)^2r^k\leq 
\mu_{m_0-1}\sum_{k=s_{m_0-1}+1}^{\infty}(k+1)^2\frac{1}{c^k}
     =a_{s_{m_0-1}}r_{m_0-1}^{s_{m_0-1}}
     \sum_{k=s_{m_0-1}+1}^{\infty}(k+1)^2\frac{1}{c^k}=\nonumber \\
     =a_{s_{m_0-1}}\frac{r_{m_0-1}^{s_{m_0-1}}}{c^{s_{m_0-1}}}c^{s_{m_0-1}}
     \sum_{k=s_{m_0-1}+1}^{\infty}(k+1)^2\frac{1}{c^k}
      \!\leq\!
     a_{s_{m_0-1}}r_{m_0}^{s_{m_0-1}}c^{s_{m_0-1}}
     \sum_{k=s_{m_0-1}+1}^{\infty}(k+1)(k+2)\frac{1}{c^k}\leq \nonumber \\
     \leq \frac{a_{s_{m_0}}r^{s_{m_0}}}{c}c^{s_{m_0-1}}
     \bigg(\sum_{k=s_{m_0-1}+1}^{\infty}x^{k+2}\bigg)^{(2)}\bigg|_{x=\frac{1}{c}}=
          \frac{a_{k_0}r^{k_0}}{c}c^{s_{m_0-1}}\left(\frac{x^{s_{m_0-1}+3}}{1-x} \right)^{(2)}\bigg|_{x=\frac{1}{c}}= \nonumber \\
          \nonumber \!=\!
          \frac{a_{k_0}r^{k_0}}{c}c^{s_{m_0-1}}\bigg( \frac{(s_{m_0-1}+3)(s_{m_0-1}+2)x^{s_{m_0-1}+1}}{1-x}+
          \frac{2(s_{m_0-1}+3)x^{s_{m_0-1}+2}}{(1-x)^2}\!+\!
          \frac{2x^{s_{m_0-1}+3}}{(1-x)^3} \bigg)\bigg|_{x=\frac{1}{c}}\leq \nonumber \\
          \leq \frac{a_{k_0}r^{k_0}}{c}c^{s_{m_0-1}}2(s_{m_0-1}+3)(s_{m_0-1}+2) \sum_{j=0}^2 \frac{x^{s_{m_0-1}+1+j}}{(1-x)^{1+j}}
          \bigg|_{x=\frac{1}{c}}\leq \nonumber \\
          \le \frac{a_{k_0}r^{k_0}}{c}2(N+3)!
          \sum_{j=0}^2\frac{1}{(c-1)^{1+j}}\leq 
          \frac{a_{k_0}r^{k_0}}{c}6(N+3)!,
      \label{th7_133}
     \end{gather}
     because $c\geq2$.
     Hence, from \eqref{th7_132} and \eqref{th7_133} we obtain
     \begin{gather*}
     \left|\sum_{j_1+j_2\neq k_0}b_{j_1,j_2}(z_1-z_1^0)^{j_1}(z_2-z_2^0)^{j_2}\right| \leq
      \nonumber \\ \leq 
      \frac{a_{k_0}r^{k_0}}{c}(N+1)^3+6\frac{a_{k_0}r^{k_0}}{c}(N+3)!=
      \frac{a_{k_0}r^{k_0}}{c}((N+1)^3+6(N+3)!)=\frac{1}{2}a_{k_0}r^{k_0},
      \end{gather*}
         Therefore, the polynomial $p_{k^0}$ is the main polynomial in the series \eqref{th7main} on $\mathbb{T}^2(z^0,\frac{R}{\mathbf{L}(z^0)}).$
     
     The necessity is proved.
     
Now we prove the sufficiency.
Suppose that there exist $p\in\mathbb{Z}_+$ and  $\eta\in(0,d)$ such that for each $z^0\in\mathbb{D}^2$ and $d=1<\beta$ with some $r=r(1,z^0)\in(\eta(1),1)$ and $k_0=k_0(1,z^0)\leq p$ the polynomial $p_{k^0}$ is the main polynomial in the series \eqref{th7main} on $\mathbb{T}^2(z^0,\frac{R}{\mathbf{L}(z^0)}).$
Then 
\begin{gather}
  \left|\sum_{j_1+j_2\neq k_0}b_{j_1,j_2}(z_1-z_1^0)^{j_1}(z_2-z_2^0)^{j_2}\right|=
  \left|F(z_1,z_2)-\sum_{j_1+j_2= k_0}b_{j_1,j_2}(z_1-z_1^0)^{j_1}(z_2-z_2^0)^{j_2}\right|\leq \nonumber \\ \leq \frac{a_{k_0}r^{k_0}}{2}.
  \label{th7_1333}
  \end{gather}
Using \eqref{th7_1333} and Cauchy's inequality we have:
  \begin{gather*}
    |b_{j_1,j_2}(z_1-z_1^0)^{j_1}(z_2-z_2^0)^{j_2}|=a_{j_1,j_2}^{*}r^{j_1+j_2}\leq \frac{a_{k_0}r^{k_0}}{2}
  \end{gather*}
  for all $j_1,j_2\in\mathbb{Z}_+,$ i.e. for all $k\neq k_0$ 
  \begin{gather}\label{th7_134}
    a_kr^k\leq  \frac{a_{k_0}r^{k_0}}{2}.
  \end{gather}
  Suppose that $F$ is not a function of bounded $\mathbf{L}$-index in joint variables. Then in view of theorem \ref{petr5} for all $p_1\in\mathbb{Z}_+$ and $c\geq 1$ there exists $z^0\in\mathbb{D}^2$ such that the next inequality holds:
  \begin{gather*}
    \max \left\{
    \frac{|F^{(j_1,j_2)}(z_1^0,z_2^0)|}{l_1^{j_1}(z_1^0,z_2^0)l_2^{j_2}(z_1^0,z_2^0)}
    \colon j_1+j_2=p_1+1
     \right\}\!
     >\!c\max \left\{
    \frac{|F^{(k_1,k_2)}(z_1^0,z_2^0)|}{l_1^{k_1}(z_1^0,z_2^0)l_2^{k_2}(z_1^0,z_2^0)}
    \colon k_1+k_2\leq p_1
     \right\}.
  \end{gather*}
  We put $p_1=p$ and $c=\left(\frac{(p+1)!}{(\eta(1))^{p+1}}\right)^2.$ Then for this $z^0(p_1,c)$ we obtain:
  \begin{gather*}
    \max \left\{
    \frac{|F^{(j_1,j_2)}(z_1^0,z_2^0)|}{j_1!j_2!l_1^{j_1}(z_1^0,z_2^0)l_2^{j_2}(z_1^0,z_2^0)}
    \colon j_1+j_2=p+1
     \right\}> \\
     >\frac{1}{(\eta(1))^{p+1}}\max \left\{
    \frac{|F^{(k_1,k_2)}(z_1^0,z_2^0)|}{k_1!k_2!l_1^{k_1}(z_1^0,z_2^0)l_2^{k_2}(z_1^0,z_2^0)}
    \colon k_1+k_2\leq p
     \right\},
  \end{gather*}
  i.e. $a_{p+1}>\frac{a_{k_0}}{(\eta(1))^{p+1}}$ and, hence,
  $a_{p+1}r^{p+1}>\frac{a_{k_0}r^{p+1}}{(\eta(1))^{p+1}}\geq a_{k_0}r^{k_0}.$ This is a contradiction with \eqref{th7_134}.
  Therefore, $F$ is of bounded $\mathbf{L}$-index in joint variables.
\end{proof}

It is easy to see that in the poof of sufficiency the radii $R=(r,r)$ of skeleton  $\mathbb{T}^2(z^0,\frac{R}{\mathbf{L}(z^0)})$ can be replaced 
by the  radii $R=(r_1,r_2),$ where $r_1$ is not necessarily equal to $r_2.$ Thus, the following theorem is true.

\begin{theorem} \label{petr8}
 Let $\beta>1$, $\mathbf{L} \in Q^2(\mathbb{D}^2)$. If there exists $p\in \mathbb{Z}_+$  that for all $d\in (0;\beta]$ there exists $\eta(d)\in (0;d)$ such that for each $z^0 \in \mathbb{D}^2$ and some $R=(r_1,r_2)$ with $r_j=r_j(d,z^0) \in(\eta (d),d),$ $j\in\{1,2\},$ and certain $k^0=k^0(d,z^0)\leq p$ the polynomial $p_{k^0}$ is the main polynomial in the series \eqref{th7main} on $\mathbb{T}^2(z^0,\frac{R}{\mathbf{L}(z^0)})$ 
 then the analytic function $F$ in $\mathbb{D}^2$  has bounded  $\mathbf{L}$-index in joint variables.
\end{theorem}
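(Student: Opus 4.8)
The plan is to repeat the sufficiency argument of the preceding theorem almost verbatim, the only change being that the single radius $r$ is replaced by a pair $(r_1,r_2)$, so that a weight $r_1^{i_1}r_2^{i_2}$ attached to a multi-index $(i_1,i_2)$ no longer collapses to one power $r^{i_1+i_2}$. Throughout I would apply the hypothesis with $d=1$, which is legitimate since $1\in(0;\beta]$ as $\beta>1$; this furnishes, for each $z^0\in\mathbb{D}^2$, radii $r_1,r_2\in(\eta(1),1)$ and an index $k_0=k_0(1,z^0)\le p$ for which $p_{k^0}$ is the main polynomial of \eqref{th7main} on $\mathbb{T}^2(z^0,\frac{R}{\mathbf{L}(z^0)})$ with $R=(r_1,r_2)$.

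First I would extract a per-coefficient estimate from the main-polynomial inequality. Keeping the notation $a^{*}_{j_1,j_2}=\frac{|F^{(j_1,j_2)}(z^0)|}{j_1!j_2!\,l_1^{j_1}(z^0)l_2^{j_2}(z^0)}$ and $a_k=\max\{a^{*}_{j_1,j_2}:j_1+j_2=k\}$ from the previous proof, the function $F-p_{k_0}=\sum_{j_1+j_2\neq k_0}b_{j_1,j_2}(z_1-z_1^0)^{j_1}(z_2-z_2^0)^{j_2}$ has modulus at most $\frac12\max\{a^{*}_{i_1,i_2}r_1^{i_1}r_2^{i_2}:i_1+i_2=k_0\}$ on the skeleton, which is exactly the main-polynomial condition (the analogue of \eqref{th7_1333}). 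Applying Cauchy's inequality to its coefficients, as in the derivation of \eqref{th7_134}, gives for every $(j_1,j_2)$ with $j_1+j_2\neq k_0$
\[
a^{*}_{j_1,j_2}r_1^{j_1}r_2^{j_2}\le\frac12\max\{a^{*}_{i_1,i_2}r_1^{i_1}r_2^{i_2}:i_1+i_2=k_0\}.
\]

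Next I would argue by contradiction. If $F$ had unbounded $\mathbf{L}$-index, then by Theorem \ref{petr5}, applied with $p_1=p$ and $c=\big((p+1)!/\eta(1)^{p+1}\big)^2$, there is a point $z^0$ where the order-$(p+1)$ derivatives dominate; passing from the unnormalised form of Theorem \ref{petr5} to the factorial normalisation (bounding $j_1!j_2!\le(p+1)!$ on the numerator side and $k_1!k_2!\ge1$ on the denominator side, exactly as before) yields $a_{p+1}>a_{k_0}/\eta(1)^{p+1}$, where $k_0\le p$ is used. Choosing $(j_1^{*},j_2^{*})$ with $j_1^{*}+j_2^{*}=p+1$ realising $a_{p+1}$ and using $r_j>\eta(1)$, one gets $r_1^{j_1^{*}}r_2^{j_2^{*}}\ge\eta(1)^{p+1}$ and hence $a^{*}_{j_1^{*},j_2^{*}}r_1^{j_1^{*}}r_2^{j_2^{*}}>a_{k_0}$.

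To close, I would note that, since $r_j<1$ and $a^{*}_{i_1,i_2}\le a_{k_0}$ whenever $i_1+i_2=k_0$, the right-hand side of the per-coefficient estimate is at most $\frac12 a_{k_0}$; and because $k_0\le p<p+1$, that estimate applies to $(j_1^{*},j_2^{*})$, forcing $a^{*}_{j_1^{*},j_2^{*}}r_1^{j_1^{*}}r_2^{j_2^{*}}\le\frac12 a_{k_0}$, which contradicts the strict bound $>a_{k_0}$ as long as $a_{k_0}>0$ (if $a_{k_0}=0$ the main-polynomial condition forces $F\equiv0$, a trivial case). Thus $F$ has bounded $\mathbf{L}$-index in joint variables. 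The single new point, and the only real obstacle, is this decoupling step: in the symmetric case the common factor $r^{k_0}$ cancels on the two sides of \eqref{th7_134}, whereas here the order-$(p+1)$ weight must be bounded below by $\eta(1)^{p+1}$ and the order-$k_0$ weights above by $1$, exploiting $\eta(1)<r_j<1$; the remainder of the argument is identical to the sufficiency proof already displayed.
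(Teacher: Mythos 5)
Your proposal is correct and is essentially the paper's own argument: the paper proves Theorem \ref{petr8} precisely by remarking that the sufficiency proof of the preceding theorem carries over verbatim when $(r,r)$ is replaced by $(r_1,r_2)$, which is what you carry out, including the same appeal to Theorem \ref{petr5} with $p_1=p$, $c=\big((p+1)!/\eta(1)^{p+1}\big)^2$. Your ``decoupling'' step (bounding the weight $r_1^{j_1^*}r_2^{j_2^*}$ below by $\eta(1)^{p+1}$ and the order-$k_0$ weights above by $1$) is exactly the inequality $r^{p+1}/\eta(1)^{p+1}>1\geq r^{k_0}$ already used implicitly in the symmetric case, so nothing genuinely new is needed, and your treatment of the trivial case $a_{k_0}=0$ is harmless but unnecessary since the strict inequality already yields the contradiction there.
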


\begin{remark}
Theorem \ref{cor1} and \ref{petr8} are new even for entire functions of bounded $\mathbf{L}$-index in joint variables. 
 \end{remark}

{\footnotesize

 \vsk

 Department of Higher Mathematics
 
 Ivano-Frankivs'k National Technical University of Oil and Gas
 
 andriykopanytsia@gmail.com
 
 Department of Function Theory and Theory of Probability
 
 Ivan Franko National University of Lviv
 
 petrechko.n@gmail.com, olskask@gmail.com

}
\vs

\received{?} 

\end{document}